\newtheorem{theo}{Theorem}[section]
\newtheorem{lemma}[theo]{Lemma}
\newtheorem{claim}[theo]{Claim}
\newtheorem{prop}[theo]{Proposition}
\newtheorem{fact}[theo]{Facts}
\def\o{\overline}
\def\C{\mathcal{C}}
\def\F{\mathcal{F}}
\def\d{\hat{d}}
\def\X{{\sf X}}
\def\Y{{\sf Y}}
\def\Z{{\sf Z}}
\def\A{{\sf A}}
\def\B{{\sf B}}
\title{On $(2k,k)$-connected graphs}
\author{Olivier Durand de Gevigney\thanks {Laboratoire G-SCOP, CNRS, Grenoble INP, UJF,  46, Avenue F\'elix Viallet, Grenoble, France, 38000.} 
\thanks{olivier.durand-de-gevigney@g-scop.inpg.fr}\\ Zolt\'an Szigeti$^*$\thanks {Zoltan.Szigeti@g-scop.inpg.fr}}
\begin{document}

\maketitle

\abstract{A graph $G$ is called $(2k,k)$-connected if $G$ is $2k$-edge-connected and $G-v$ is $k$-edge-connected for every vertex $v$. The study of $(2k,k)$-connected graphs is motivated by a conjecture of Frank \cite{Frank_1995} which states that a graph has a $2$-vertex-connected orientation if and only if it is $(4,2)$-connected.
In this paper, we provide a construction of the family of $(2k,k)$-connected graphs for $k$ even which generalizes the construction given by Jord\'an \cite{Jordan_2006} for $k=2.$  
We also solve the corresponding connectivity augmentation problem: given a graph $G$ and an integer $k\geq 2$,  what is the minimum number of edges to be added to make $G$ $(2k,k)$-connected. 
Both these results are based on a new splitting-off theorem for $(2k,k)$-connected graphs. 
}

\section{Introduction}

Let $G=(V,E)$ be a graph and $k$ a positive integer. Loops and parallel edges are allowed in $G$. The graph $G$ is called \emph{$\ell$-edge-connected} if, for all $F \subseteq E$ such that $|F|<\ell$, $G-F$ is connected. The graph $G$ is called \emph{$\ell$-vertex-connected} if, $|V| > \ell$ and for all $X \subseteq V$ such that $|X|<\ell$, $G-X$ is connected. The graph $G$ is called \emph{$(2k,k)$-connected} if $|V| \geq 3$, $G$ is $2k$-edge-connected and, for all $v \in V$, $G-v$ is $k$-edge-connected. This connectivity is a special case of a mixed-connectivity introduced by Kaneko and Ota \cite{Kaneko_Ota2000} which contains both vertex-connectivity and edge-connectivity.
\medskip

To motivate our problems, let us start with a result on orientations of graphs. Nash-Williams \cite{NashWill} proved that an undirected  graph has a $k$-edge-connected orientation if and  only if it is $2k$-edge-connected. 

This theorem can easily be proved by applying Lov\'asz' construction \cite{LovaszSplit} of $2k$-edge-connected graphs. He proved that  a graph is $2k$-edge-connected if and only if it can be obtained from $2kK_2$, the graph on $2$ vertices  with $2k$ edges between them, by repeating the following two operations:  adding an edge and pinching $k$ edges, that is subdividing each of the $k$ edges by a new vertex and identifying these new vertices. 

To prove Lov\'asz' construction one has to consider the inverse operations: deleting an edge and complete splitting-off at a vertex of degree $2k.$ Let us now introduce the operation of complete splitting-off at a vertex $s$ of even degree. It consists of partitioning the set of edges incident to $s$ into pairs, replacing each pair $(su,sv)$ by a new edge $uv$ and then deleting $s.$ When no edge can be deleted without destroying $2k$-edge-connectivity, it is easy to prove that there exists a vertex of degree  $2k.$ Then  Lov\'asz' splitting-off theorem \cite {LovaszSplit} implies the existence of a complete splitting-off at this vertex that preserves $2k$-edge-connectivity.

We mention that Lov\'asz' splitting-off theorem is valid for $\ell$-edge-connectivity where $\ell$ is any integer larger than $1.$ This theorem  has other applications, among others, it can be used  to solve the $\ell$-edge-connected augmentation problem (Frank \cite{frank_1992}). This can be formulated as follows: given a graph $G$ and an integer  $\ell\geq 2$, what is the minimum number of edges to be added to make $G$ $\ell$-edge-connected.\\
\medskip

Inspired by Nash-Williams' result, Frank  \cite{Frank_1995} proposed a conjecture that would characterize undirected graphs having a $k$-vertex-connected orientation. We present here only the special case $k=2.$ He conjectured that an undirected graph has a $2$-vertex-connected orientation if and only if it is $(4,2)$-connected.

This conjecture drew attention on the family of $(4,2)$-connected graphs. Jord\'an \cite{Jordan_2006}  gave a construction of this family,  similar to Lov\'asz' construction of $2k$-edge-connected graphs. He showed that a graph is $(4,2)$-connected if and only if it can be obtained from $2K_3$, the graph on $3$ vertices and $2$ edges between each pair of vertices, by repeating the following two operations:  adding an edge and pinching $2$ edges such that if one of them is a loop then the other one is not adjacent to it. 
Unfortunately, this construction helped to prove Frank's conjecture only in the Eulerian case \cite{BergTibor}.
To get this construction Jord\'an proved a splitting-off theorem on $(4,2)$-connected graphs. Here it is possible that there exists no complete splitting-off preserving $(4,2)$-connectivity, in this case a special kind of obstacle exists.
\medskip

We propose here to study $(2k,k)$-connected graphs.
First, we provide  a new splitting-off theorem for $(2k,k)$-connected graphs. As in the special case $k=2$, the existence of a complete splitting-off preserving $(2k,k)$-connectivity depends on the non-existence of an obstacle. Second,  we give  a construction of the family of $(2k,k)$-connected graphs for $k$ even. These are the natural  generalizations of the previous  results of Jord\'an \cite{Jordan_2006} on (4,2)-connected graphs. Finally, we solve the $(2k,k)$-connectivity augmentation problem. We follow Frank's approach \cite{frank_1992}: we find a minimal extension and then we apply our splitting-off theorem. This way we provide a new polynomial case for  connectivity augmentation.


\section{Definitions}

Let $\Omega$ be a ground set. The \emph{complement} of a subset $U \subseteq \Omega$ is defined by $\o{U}=\Omega \setminus U$.
For $X_I \subseteq X_O \subseteq \Omega$, $\X=(X_O,X_I)$ is called a \emph{bi-set} of $\Omega$. The sets $X_I$, $X_O$ and  $w(\X) = X_O\setminus X_I$ are, respectively, the \emph{inner-set}, the \emph{outer-set} and the \emph{wall} of $\X$. If $X_I= \emptyset$ or $X_O = \Omega$, the bi-set $\X$ is called \emph{trivial}. If $w(\X)$ is non-empty then $\X$ is called a \emph{pair} and a \emph{set} otherwise.
The \emph{intersection} and the \emph{union} of two bi-sets $\X=(X_O,X_I)$ and $\Y=(Y_O,Y_I)$ are defined by $\X \sqcap \Y= (X_O \cap Y_O, X_I \cap Y_I)$ and $\X \sqcup \Y =(X_O \cup Y_O, X_I \cup Y_I)$. We say that \emph{$\X$ is included in $\Y$}, denoted by $\X \sqsubseteq \Y$, if $X_O \subseteq Y_O$ and $X_I \subseteq Y_I$. We say that $\X$ and $\Y$ are \emph{innerly-disjoint} if $X_I \cap Y_I=\emptyset$.
We extend the complement operation to bi-sets by defining the complement of $\X$ as $\o{\X}=(\o{X_I},\o{X_O})$. For a family $\F$ of bi-sets of $\Omega$, we denote $\Omega_I(\F) = \cup_{\X \in \F}X_I$.
A bi-set function $b$ is called \emph{submodular} if, for all bi-sets $\X$ and $\Y$,
\begin{equation}
 b(\X) + b(\Y) \geq b(\X \sqcap \Y) + b(\X \sqcup \Y).
\end{equation}

Let $G=(V,E)$ be a graph. For $U,W \subset V$, $d_G(U,W)$ 
denotes the number of edges with one end-vertex in $U\setminus W$ and the other end-vertex in $W \setminus U$. For short, $d_G(U,\o{U})$ is denoted by $d_G(U)$.
An edge $uv$ \emph{enters} a bi-set $\X=(X_O,X_I)$ of $V$, if $u \notin X_O$ and $v \in X_I$. The \emph{degree} of $\X$, $\hat{d}_G(\X)$, is the number of edges entering $\X$. Observe that $\d_G$ is symmetric with respect to the complement operation of bi-sets.\\

We can reformulate the $(2k,k)$-connectivity using bi-sets.
Note that, the graph $G$ is $(2k,k)$-connected, if $|V|\geq3$ and, for all non-trivial bi-sets $\X$ of $V$,
\begin{equation} \label{def_con}
 f_G(\X) := \hat{d}_G(\X) + k|w(\X)| \geq 2k.
\end{equation}
The graph $G$ is called \emph{minimally} $(2k,k)$-connected if $G$ is $(2k,k)$-connected and $G-e$ is not $(2k,k)$-connected for any $e \in E$.
Note that if $\X$ is a set, $f_G(\X) = \hat{d}_G(\X) = d_G(X_I) = d_G(X_O).$

Let $H=(V+s,E)$ be a graph with a special vertex $s$. For convenience, in this paper $H$ will always denote a graph with a special vertex $s$. 
We denote by $N_H(s)$ the set of neighbors of $s$ in $H$.
The graph $H$ is called \emph{$(2k,k)$-connected in $V$} if  $|V|\geq3$ and \eqref{def_con} holds in $H$ for all non-trivial bi-sets of $V$. A bi-set of $V+s$ satisfying \eqref{def_con} with equality is called \emph{tight}.
The graph $H$ is called \emph{$k$-edge-connected in $V$} if $d_H(X) \geq k$ for every non-trivial set $X$ of $V.$
Note that, considering the graph $H$, the complement of a set or a bi-set is taken relatively to the ground set $V+s$.

Let $(su,sv)$ a pair of (possibly parallel) edges. \emph{Splitting-off} the pair $(su,sv)$ at $s$ in $H$ consists of replacing the edges $su,sv$ by a new edge $uv$. The graph arising from this splitting-off at $s$ is denoted by $H_{u,v}$.
If $d_H(s)$ is even then a sequence of $\frac{1}{2}d_H(s)$ splitting-off of disjoint pairs at $s$ is called a \emph{complete splitting-off} at $s$.
If $H$ and $H_{u,v}$ are $(2k,k)$-connected in $V$, then the pair $(su,sv)$ is called $(2k,k)$-\emph{admissible} (shortly, \emph{admissible} when $k$ is clear from the context). A complete splitting-off is called \emph{admissible} if the resulting graph is $(2k,k)$-connected in $V$.

The inverse operation of a complete splitting-off is defined as follows. Let $G=(V,E)$ be a graph, for $F \subseteq E$, \emph{pinching} $F$, consists of adding a new vertex in the middle of each edge in $F$ and, then, identifying these new vertices as a single one.\\

\section{Preliminaries}

In this section we provides some basic observations.

\begin{claim} 
Let $H=(V+s,E)$ be a $(2k,k)$-connected graph in $V$ and $\X$  a non-trivial bi-set of $V$. Then
 \begin{eqnarray}
2k-f_H(\X) & \leq & d_H(s,\o{X_O}) - d_H(s,X_I),\label{plusineq}\\
 d_H(s,X_I) & \leq & \left \lfloor \frac{1}{2}(d_H(s)-d_H(s,w(\X))+f_H(\X)-2k) \right \rfloor.\label{neigbors_of_s}
\end{eqnarray}
\end{claim}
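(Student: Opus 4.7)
My plan for \eqref{plusineq} is to apply the $(2k,k)$-connectivity inequality to a carefully chosen companion bi-set of $\X$, whose definition essentially swaps the role of the inner-set and the complement of the outer-set relative to $V$. Specifically, I would set $\Y = (V\setminus X_I,\, V\setminus X_O)$; since $X_I \subseteq X_O \subseteq V$, this is a bi-set of $V$, and it is non-trivial precisely when $\X$ is (indeed $V\setminus X_I = V$ iff $X_I = \emptyset$, and $V\setminus X_O = \emptyset$ iff $X_O = V$). The hypothesis that $H$ is $(2k,k)$-connected in $V$ then gives $f_H(\Y) \geq 2k$.

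The key observation is that the walls coincide, namely $w(\Y) = (V\setminus X_I)\setminus(V\setminus X_O) = X_O\setminus X_I = w(\X)$, so $f_H(\Y) - f_H(\X) = \d_H(\Y) - \d_H(\X)$, and it suffices to evaluate this degree difference. A direct edge count shows that every edge with both endpoints in $V$ contributes identically to $\d_H(\X)$ and $\d_H(\Y)$ (in both cases it is counted iff it joins $X_I$ to $V\setminus X_O$), whereas edges incident to $s$ contribute asymmetrically: an edge $sv$ enters $\X$ iff $v \in X_I$, and enters $\Y$ iff $v \in V\setminus X_O$. Hence $\d_H(\Y) - \d_H(\X) = d_H(s,\o{X_O}) - d_H(s,X_I)$, and combining this with $f_H(\Y) \geq 2k$ yields \eqref{plusineq} at once.

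For \eqref{neigbors_of_s}, I would exploit the partition $V = X_I \cup w(\X) \cup (V\setminus X_O)$, which, after sorting the edges at $s$ according to which class their other endpoint lies in, produces $d_H(s) = d_H(s,X_I) + d_H(s,w(\X)) + d_H(s,\o{X_O})$. Solving for $d_H(s,\o{X_O})$ and substituting into \eqref{plusineq} gives $2\,d_H(s,X_I) \leq d_H(s) - d_H(s,w(\X)) + f_H(\X) - 2k$; since the left-hand side is an integer, the floor on the right is legitimate, which is exactly \eqref{neigbors_of_s}.

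The main obstacle is a conceptual one rather than computational: one must guess the right companion bi-set. The naive candidate $\o{\X}$ (the full complement in $V+s$) is not useful because $s$ then lies on the wrong side and the asymmetric contribution of $s$ is masked; the correct choice is the ``complement taken inside $V$'', which preserves $w(\X)$ while isolating precisely the discrepancy between how $s$ sees $X_I$ and how it sees $V\setminus X_O$. Once $\Y$ is identified, the remainder is a routine partition-and-substitute argument.
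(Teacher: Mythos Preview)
Your proof is correct and follows essentially the same approach as the paper: your companion bi-set $\Y=(V\setminus X_I,\,V\setminus X_O)$ is precisely the paper's $(\o{X_I}-s,\,\o{X_O}-s)$, and your derivation of \eqref{neigbors_of_s} via the partition $V=X_I\cup w(\X)\cup(V\setminus X_O)$ matches the paper's one-line substitution. The only difference is cosmetic: you spell out the edge-count verifying $\d_H(\Y)-\d_H(\X)=d_H(s,\o{X_O})-d_H(s,X_I)$, which the paper simply asserts.
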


\begin{proof}  Note that $(\o{X_I}-s,\o{X_O}-s)$ is a non-trivial bi-set of $V$ and that $f_H(\o{X_I}-s,\o{X_O}-s) - f_H(\X) = d_H(s,\o{X_O}) - d_H(s,X_I)$. Then, by \eqref{def_con}, \eqref{plusineq} follows.
By \eqref{plusineq}, and $d_H(s,\o{X_O}) - d_H(s,X_I) = d_H(s) - d_H(s,w(\X)) - 2d_H(s,X_I)$, \eqref{neigbors_of_s} follows.
\end{proof}

\begin{prop} \label{submodularity}
 Let $H=(V+s,E)$ be a graph, and $\X$ and $\Y$ two bi-sets of $V+s$. We have
\begin{equation}
 \hat{d}_H(\X) + \hat{d}_H(\Y) = \hat{d}_H(\X \sqcap \Y) + \hat{d}_H(\X \sqcup \Y) + d_H(\o{X_O} \cap Y_O, X_I \cap \o{Y_I}) + d_H(\o{Y_O} \cap X_O, Y_I \cap \o{X_I}). \label{main_eq}
\end{equation}
 Moreover, if $H$ is $(2k,k)$-connected in $V$, $|w(\X \sqcup \Y)|\geq 2$ and $\X \sqcap \Y$ is a non-trivial bi-set of $V$, then
\begin{equation} \label{pairs}
 (f_H(\X)-2 k) + (f_H(\Y)-2 k) \geq \d_H(\X \sqcup \Y) + d_H(\o{X_O} \cap Y_O, X_I \cap \o{Y_I}) + d_H(\o{Y_O} \cap X_O, Y_I \cap \o{X_I}).  
\end{equation}
\end{prop}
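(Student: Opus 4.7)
My plan is to prove identity (3) first, and then derive the inequality (4) from it by combining with the modularity of the wall-cardinality and applying $(2k,k)$-connectivity once.

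To establish (3), I would partition $V+s$ into the nine cells given by the refinement of the partitions $\{\o{X_O}, w(\X), X_I\}$ and $\{\o{Y_O}, w(\Y), Y_I\}$, and then check the identity edge-by-edge. For an edge $uv$, each of the six quantities $\d_H(\X)$, $\d_H(\Y)$, $\d_H(\X \sqcap \Y)$, $\d_H(\X \sqcup \Y)$, $d_H(\o{X_O}\cap Y_O, X_I\cap \o{Y_I})$, $d_H(\o{Y_O}\cap X_O, Y_I\cap \o{X_I})$ depends only on the pair of cells containing $u$ and $v$. Most of the $\binom{9}{2}+9$ such pairs contribute $0$ to both sides, so the verification reduces to a short case check: typically an edge either contributes to exactly one term on each side, or to two terms on the right (the ``crossing'' contribution) and to one term on each of the two large bi-set degrees on the left.

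Next, I would combine (3) with the elementary modularity
\begin{equation*}
|w(\X)| + |w(\Y)| \;=\; |w(\X \sqcap \Y)| + |w(\X \sqcup \Y)|,
\end{equation*}
which follows from standard inclusion–exclusion applied separately to the inner-sets and the outer-sets (using $X_I \subseteq X_O$ and $Y_I \subseteq Y_O$). Multiplying this equality by $k$ and adding to (3), using $f_H = \d_H + k|w|$, yields
\begin{equation*}
f_H(\X) + f_H(\Y) \;=\; f_H(\X \sqcap \Y) + f_H(\X \sqcup \Y) + d_H(\o{X_O}\cap Y_O, X_I\cap \o{Y_I}) + d_H(\o{Y_O}\cap X_O, Y_I\cap \o{X_I}).
\end{equation*}

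Finally, for the bound, I would treat $\X \sqcap \Y$ and $\X \sqcup \Y$ differently. Since $\X \sqcap \Y$ is a non-trivial bi-set of $V$ and $H$ is $(2k,k)$-connected in $V$, inequality (2) gives $f_H(\X \sqcap \Y) \geq 2k$. For $\X \sqcup \Y$ I do not need the connectivity hypothesis at all: the hypothesis $|w(\X \sqcup \Y)| \geq 2$ and the definition of $f_H$ give
\begin{equation*}
f_H(\X \sqcup \Y) - 2k \;=\; \d_H(\X \sqcup \Y) + k\bigl(|w(\X \sqcup \Y)| - 2\bigr) \;\geq\; \d_H(\X \sqcup \Y).
\end{equation*}
Substituting these two lower bounds into the displayed equation and subtracting $4k$ from both sides yields (4). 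The main obstacle is just the careful bookkeeping for (3); once that is done, the rest is a short algebraic rearrangement, and the two lower-bound steps isolate exactly where each hypothesis is used.
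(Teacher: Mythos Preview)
Your proposal is correct and follows essentially the same approach as the paper. For \eqref{main_eq} the paper simply asks the reader to check edge-by-edge, which is exactly your nine-cell plan; for \eqref{pairs} the paper uses the same three ingredients (modularity of $|w(\cdot)|$, the bound $f_H(\X\sqcap\Y)\geq 2k$ from non-triviality, and $k|w(\X\sqcup\Y)|\geq 2k$) in a slightly different order, but the argument is the same.
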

\begin{proof}
 \eqref{main_eq} We let the reader carefully check that any edge participates to the same amount on both sides.
 \medskip

 \eqref{pairs} By modularity of $k|w(.)|$, by \eqref{def_con}, since $\X \sqcap \Y$ is a non-trivial bi-set of $V$, by $|w(\X \sqcup \Y)|\geq 2$ and by \eqref{main_eq}, we have, 
$(f_H(\X)-2 k) + (f_H(\Y)-2 k) 
  = \d_H(\X) + \d_H(\Y) + k|w(\X)| + k|w(\Y)| - 2k - 2k 
  \geq \d_H(\X) + \d_H(\Y) + k|w(\X \sqcap \Y)| + k|w(\X \sqcup \Y)| - f_H(\X \sqcap \Y) - 2k 
  \geq \d_H(\X) + \d_H(\Y) - \d_H(\X \sqcap \Y) 
  \geq \d_H(\X \sqcup \Y) + d_H(\o{X_O} \cap Y_O, X_I \cap \o{Y_I}) + d_H(\o{Y_O} \cap X_O, Y_I \cap \o{X_I}).$
\end{proof}
\noindent Note that, by \eqref{main_eq} and modularity of $w(.)$,  $f_H(.)$ is submodular.

\begin{claim}\label{tight_intersection}
 Let $H=(V+s,E)$ be a $(2k,k)$-connected graph in $V$ and $\X$ and $\Y$ two tight bi-sets of $V+s.$ If $\X \sqcap \Y$ and $\X \sqcup \Y$ (resp. $\o{\X \sqcup \Y}$) are non-trivial bi-sets of $V$ then $\X \sqcap \Y$ and $\X \sqcup \Y$ (resp. $\o{\X \sqcup \Y}$) are tight and 
 $d_H(\o{X_O} \cap Y_O, X_I \cap \o{Y_I}) = d_H(\o{Y_O} \cap X_O, Y_I \cap \o{X_I}) = 0.$
\end{claim}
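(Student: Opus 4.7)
The plan is to derive everything from a single master identity obtained by combining \eqref{main_eq} of Proposition \ref{submodularity} with the modularity of $|w(\cdot)|$, namely
\begin{equation*}
f_H(\X) + f_H(\Y) = f_H(\X \sqcap \Y) + f_H(\X \sqcup \Y) + d_H(\o{X_O} \cap Y_O, X_I \cap \o{Y_I}) + d_H(\o{Y_O} \cap X_O, Y_I \cap \o{X_I}).
\end{equation*}
This is the submodularity of $f_H$ remarked upon after Proposition \ref{submodularity}, but kept in equational form so that the slack stays visible; note that it holds for arbitrary bi-sets of $V+s$, since $\hat{d}_H$ and $|w(\cdot)|$ are defined on all bi-sets without any triviality hypothesis.

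In the first case, where $\X \sqcap \Y$ and $\X \sqcup \Y$ are non-trivial bi-sets of $V$, the argument is essentially one line. The left-hand side equals $4k$ by the tightness of $\X$ and $\Y$; each of $f_H(\X \sqcap \Y)$ and $f_H(\X \sqcup \Y)$ is at least $2k$ by \eqref{def_con}; and the two $d_H$-terms are non-negative. Hence the right-hand side is already at least $4k$, and equality must hold everywhere. This simultaneously yields that $\X \sqcap \Y$ and $\X \sqcup \Y$ are tight and that both $d_H$-terms vanish.

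The second case reduces to the first via the complement-invariance of $f_H$. Since $\hat{d}_H$ is symmetric under the complement operation on bi-sets (noted in Section 2) and $|w(\o{\Z})| = |w(\Z)|$ trivially, we have $f_H(\X \sqcup \Y) = f_H(\o{\X \sqcup \Y})$. Substituting this on the right-hand side of the master identity, the previous argument applies verbatim, using now the hypothesis that $\o{\X \sqcup \Y}$ is a non-trivial bi-set of $V$ to conclude $f_H(\o{\X \sqcup \Y}) \geq 2k$.

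I do not anticipate any real obstacle. The one point that requires a little discipline is to insist on the equality form of submodularity rather than the one-sided bound \eqref{pairs}, since the conclusion that both $d_H$-terms equal $0$ relies on seeing the slack explicitly; the complement trick in the second case is the only moderately non-obvious ingredient.
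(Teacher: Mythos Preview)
Your proof is correct and is essentially identical to the paper's own argument: both derive the equational form of submodularity for $f_H$ from \eqref{main_eq} and the modularity of $|w(\cdot)|$, use tightness and \eqref{def_con} to force equality throughout, and handle the complement case via the symmetry $f_H(\X\sqcup\Y)=f_H(\o{\X\sqcup\Y})$. The paper just packages the two cases together by setting $\Z$ equal to $\X\sqcup\Y$ or $\o{\X\sqcup\Y}$ in a single chain of (in)equalities.
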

\begin{proof}
 Let $\Z = \X \sqcup \Y$ (resp.  $\Z = \o{\X \sqcup \Y}$).
 By tightness of $\X$ and $\Y$, \eqref{main_eq},  modularity of $k|w(.)|$, non-negativity of the degree function $d_H$, symmetry of $f_H$ and $(2k,k)$-connectivity in $V$ of $H$, we have
 $ 2k + 2k = f_H(\X) + f_H(\Y) 
 = f_H(\X \sqcap \Y) + f_H(\X \sqcup \Y) + d_H(\o{X_O} \cap Y_O, X_I \cap \o{Y_I}) + d_H(\o{Y_O} \cap X_O, Y_I \cap \o{X_I})
  \geq  f_H(\X \sqcap \Y) + f_H(\Z) + 0 + 0 
  \geq  2k + 2k.$
 Hence there exists equality everywhere and the claim follows. 
\end{proof}

\section{Blocking bi-sets}

Let $H=(V+s,E)$ be a $(2k,k)$-connected graph in $V$ with a special vertex $s$ and $(su,sv)$ a pair of edges. A non-trivial bi-set $\X$ of $V$ is called a \emph{blocking bi-set} for the pair $(su,sv)$ if,
\begin{itemize}
  \item[(a)] either $u,v \in X_I$ and $f_H(\X) \leq 2k+1$,
  \item[(b)] or $u \in X_I$, $v = w(\X)$ (or $v \in X_I$, $u = w(\X)$) and $f_H(\X) \leq 2k.$
\end{itemize}
Note that by (a) and (b), for a blocking bi-set $\X$,
\begin{eqnarray}
 |w(\X)| & \leq &  1, \label{w_leq_1} \\
 f_H(\X) - 2k & \leq & d_H(s,X_I) -1.\label{blocking_ineq}
\end{eqnarray}
For short we say that $\X$ \emph{blocks} $(su,sv)$.
If (a) occurs, then $\X$ is called \emph{dangerous} and \emph{critical} otherwise. Note that critical pairs are tight. Note also that if $\X$ blocks $(su,sv)$ then, after any sequence of splitting-off not containing $su$ nor $sv$, $\X$ still blocks $(su,sv)$. The term blocking is justified by the following lemma.

\begin{lemma}\label{blocking_bi-set}
 Let $H=(V+s,E)$ be a $(2k,k)$-connected graph in $V$. A pair $(su,sv)$ is non-admissible if and only if there exists a bi-set of $V$ blocking $(su,sv)$.
\end{lemma}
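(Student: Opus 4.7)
The plan is to prove both directions by tracking how $f_H(\X)$ changes under the splitting-off of $(su,sv)$. For the reverse direction, if $\X$ blocks $(su,sv)$, then the edge $sx$ (for $x \in \{u,v\}$) enters $\X$ iff $x \in X_I$, while the new edge $uv$ has both endpoints in $X_O$ and therefore does not enter $\X$. Hence $f_{H_{u,v}}(\X) = f_H(\X) - 2$ in case (a) and $f_{H_{u,v}}(\X) = f_H(\X) - 1$ in case (b); in both situations the hypothesized bound on $f_H(\X)$ yields $f_{H_{u,v}}(\X) \leq 2k-1$, so $(su,sv)$ is non-admissible.

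For the forward direction, fix a non-trivial bi-set $\X$ of $V$ with $f_{H_{u,v}}(\X) \leq 2k-1$. Since $f_H(\X) \geq 2k$, the splitting-off strictly decreases $f_H(\X)$, and the decrease equals the number of edges among $su, sv$ that enter $\X$ minus one if $uv$ enters $\X$. A case analysis on the positions of $u$ and $v$ within the three zones $X_I$, $w(\X)$, $\o{X_O}$ shows that the only configurations yielding positive decrease are $u,v \in X_I$ (decrease $2$, so $f_H(\X) \leq 2k+1$ and condition (a) is met) and, up to symmetry, $u \in X_I$ with $v \in w(\X)$ (decrease $1$, so $f_H(\X) \leq 2k$).

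The delicate point is the second case, where to invoke condition (b) we need $w(\X) = \{v\}$, not merely $v \in w(\X)$. The key observation is that the edge $su$ itself automatically enters $\X$, since $s \notin X_O$ (as $\X$ is a bi-set of $V$) and $u \in X_I$; hence $\hat{d}_H(\X) \geq 1$. Combined with $f_H(\X) = \hat{d}_H(\X) + k|w(\X)| \leq 2k$, this forces $k|w(\X)| \leq 2k-1$, i.e.\ $|w(\X)| \leq 1$. Since $v \in w(\X)$, we conclude $w(\X) = \{v\}$ and $\X$ itself satisfies (b). The anticipated main obstacle, namely having to transform $\X$ in the subcase $|w(\X)| \geq 2$ via some submodular uncrossing argument, is thus bypassed by the forced inequality $\hat{d}_H(\X) \geq 1$ coming from the presence of the edge $su$.
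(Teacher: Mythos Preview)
Your proof is correct and follows essentially the same approach as the paper's, namely analysing how $\hat d_H(\X)$ changes under the splitting-off and classifying the cases where it drops by $1$ or $2$. In fact you are more careful than the paper on one point: the paper simply asserts ``$v = w(\X)$'' in the degree-drop-by-$1$ case, whereas you explicitly justify $|w(\X)|\le 1$ via $\hat d_H(\X)\ge 1$ and $f_H(\X)\le 2k$.
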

\begin{proof} The sufficiency is clear. Let us see the necessity.
Since $(su,sv)$ is non-admissible, there exists a non-trivial bi-set $\X$ of $V$ which violates (\ref{def_con}) in $H_{u,v}$. Since $f_H(\X) \geq 2k$, either $\d_{H_{u,v}}(\X) = \d_H(\X) - 2$, that is $u,v \in X_I$ and $f_H(\X) \leq 2k+1$ or $\d_{H_{u,v}}(\X) = \d_H(\X) - 1$, that is $u \in X_I$ and $v = w(\X)$ (or $v \in X_I$ and $u = w(\X)$), and $f_H(\X) \leq 2k.$
\end{proof}

 By the remark above, if the pair $(su,sv)$ is non-admissible in $H$, then $(su,sv)$ is non-admissible in any graph arising from $H$ by a sequence of splitting-off. 
\medskip

We will heavily rely on the following lemma whose proof is quite technical.

\begin{lemma} \label{main_lemma}
 Let $H=(V+s,E)$ be a $(2k,k)$-connected graph in $V$ with $d_H(s)$ even. Let $\X$ be a maximal blocking bi-set for a pair $(su,sv)$ with $u \in X_I.$ Let $w \in N_H(s) \setminus X_I$  and $\Y$ a blocking bi-set for the pair  $(su,sw)$. Then $\X$ and $\Y$ are both pairs with the same wall.
\end{lemma}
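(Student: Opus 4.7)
I would reduce the lemma to the auxiliary claim that $\Y \sqsubseteq \X$, from which the conclusion is essentially automatic. Since $\Y$ blocks $(su,sw)$ we have $w \in Y_I \cup w(\Y)$; using $Y_I \subseteq X_I$ together with $w \notin X_I$, the alternative $w \in Y_I$ is impossible, hence $w \in w(\Y) \subseteq Y_O \subseteq X_O$, and combined with $w \notin X_I$ this forces $w \in w(\X)$. Applying \eqref{w_leq_1} to both bi-sets gives $w(\X) = w(\Y) = \{w\}$, so both are pairs with the same wall.

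To prove $\Y \sqsubseteq \X$, I argue by contradiction. Suppose $\Y \not\sqsubseteq \X$; then $\X \sqcup \Y$ strictly contains $\X$. I aim to show that $\X \sqcup \Y$ is itself a blocking bi-set for $(su, sv)$, contradicting the maximality of $\X$. Non-triviality of $\X \sqcup \Y$ as a bi-set of $V$ follows from $u \in (\X \sqcup \Y)_I$. The vertex $u$ lies in its inner set; $v$ lies either in $X_I \subseteq (\X \sqcup \Y)_I$ (if $\X$ is dangerous) or in $w(\X)$ (if $\X$ is critical), and a short case-check handles the resulting sub-possibilities depending on whether $v$ sits in $Y_I$.

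The core of the argument is the degree bound. When $u \in Y_I$, $\X \sqcap \Y$ contains $u$ in its inner set and is therefore non-trivial of $V$, so $f_H(\X \sqcap \Y) \geq 2k$. Combined with $f_H(\X), f_H(\Y) \leq 2k+1$, the identity \eqref{main_eq} together with the modularity of $k|w(\cdot)|$ yields
\[
f_H(\X \sqcup \Y) \leq f_H(\X) + f_H(\Y) - f_H(\X \sqcap \Y) \leq 2k+2,
\]
one unit above the needed $2k+1$. I close this gap by sub-cases: if $\X$ or $\Y$ is critical, $f_H \leq 2k$ tightens the bound automatically; if both are dangerous, either a cross-term in \eqref{main_eq} is positive (saving the unit), or equality forces $\X \sqcap \Y$ to be tight and both cross-terms to vanish, from which additional information extracted through \eqref{plusineq}, \eqref{neigbors_of_s}, and the parity of $d_H(s)$ either produces a blocking bi-set strictly larger than $\X$ or contradicts the $(2k,k)$-connectivity of $H$. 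The exceptional sub-case $u \in w(\Y)$ (where $\X \sqcap \Y$ may be trivial of $V$) is treated separately, by substituting for $\Y$ a suitable complement-truncation (using $f_H(\o{\Y}) = f_H(\Y)$) so that the uncrossing can proceed.

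\textbf{Main obstacle.} The plain submodular bound on $f_H(\X \sqcup \Y)$ is exactly one unit above the threshold needed for $\X \sqcup \Y$ to block $(su,sv)$, and closing this unit gap forces a careful case analysis on the dangerous/critical types of $\X, \Y$ and on the placement of $v$ and $w$ inside the inner sets, walls, and outer sets of both bi-sets. The sub-case where $u$ sits in the wall of $\Y$ is additionally delicate, since the natural uncrossing of $\X$ and $\Y$ breaks down and one must pass through a complement-truncation of $\Y$ to continue.
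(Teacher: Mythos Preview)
Your reduction to the auxiliary claim $\Y \sqsubseteq \X$ is fatally flawed: that claim is simply false under the lemma's hypotheses. Take the situation in the paper's Theorem~\ref{firstobstacle}, where $w$ is chosen in $\o{X_O}\subseteq N_H(s)\setminus X_I$. The lemma's conclusion there yields $w(\X)=w(\Y)=\{t\}$ for some vertex $t\in X_O$; since $w\notin X_O$ we have $w\neq t$, and since $\Y$ blocks $(su,sw)$ with $w\neq w(\Y)$ it follows that $w\in Y_I$. Thus $w\in Y_I\setminus X_I$ and $\Y\not\sqsubseteq\X$. In particular your derived conclusion $w(\X)=w(\Y)=\{w\}$ is strictly stronger than the lemma and is not true in general.

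The mechanism you propose for the contradiction---showing that $\X\sqcup\Y$ is itself a blocking bi-set for $(su,sv)$---cannot work as stated either. Blocking bi-sets satisfy $|w(\cdot)|\le 1$ by \eqref{w_leq_1}, but when $w(\X)$ and $w(\Y)$ are distinct singletons with $w(\X)\cap Y_I=\emptyset=w(\Y)\cap X_I$ (Case~1 in the paper's proof) one has $|w(\X\sqcup\Y)|=2$, so $\X\sqcup\Y$ is never a blocking bi-set regardless of the value of $f_H$. You also do not justify $(\X\sqcup\Y)_O\neq V$, which is needed for non-triviality and is exactly the content of Fact~\ref{facts}\eqref{X_cup_Y_not_trivial} in the paper (and requires $d_H(s)$ even together with \eqref{neigbors_of_s}). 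The paper's proof does not try to produce a single larger blocking bi-set; instead it first rules out that either of $\X,\Y$ is a set (Claims~\ref{X_Y_not_both_pair} and \ref{X_Y_pairs}), and then runs a four-way case analysis on the relative positions of the two singleton walls (Claim~\ref{wall}), each case using a different uncrossing (with $\X$ vs.\ $\o\X$ and $\Y$ vs.\ $\o\Y$) and invoking \eqref{pairs} or submodularity together with the maximality of $\X$. Your plan collapses all of this into one uncrossing of $\X$ with $\Y$, which cannot cover the configurations where the walls differ.
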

\begin{proof} 
\begin{fact} $\X$ and $\Y$ satisfy the following. \label{facts}
 \begin{enumerate}[(a)]
  \item \label{X_cap_Y_not_trivial} 
  If $w(\Y) \cap X_I=\emptyset$ then $\X \sqcap \Y$ is a non-trivial bi-set of $V$.  
  \item \label{oX_cap_Y_not_trivial} If $w(\X) \cap Y_I = \emptyset = w(\Y) \cap \o{X_I}$ then $\o{\X} \sqcap \Y$ is a non-trivial bi-set of $V$.
  \item \label{X_cap_oY_not_trivial} If  $w(\X) \cap \o{Y_I}= \emptyset = w(\Y) \cap X_I$ then $\X \sqcap \o{\Y}$ is a non-trivial bi-set of $V$.
  \item \label{X_cup_Y_not_trivial} If $w(\X) \cap \o{Y_I}= \emptyset = w(\Y) \cap \o{X_I}$ then $\X \sqcup \Y$ is a non-trivial bi-set of $V$ strictly containing $\X$.
 \end{enumerate}
\end{fact}
 
\begin{proof}
\eqref{X_cap_Y_not_trivial} Since $u \in X_I$, $\Y$ blocks $(su,sw)$ and $w(\Y) \cap X_I=\emptyset$, we have $u \in X_I \cap Y_O = X_I \cap (Y_I \cup w(\Y))=X_I \cap Y_I$. Since $\X$ is non-trivial and $X_I \cap Y_I \neq \emptyset$, $\X \sqcap Y$ is non-trivial.
\medskip

\eqref{oX_cap_Y_not_trivial} Since $Y$ blocks $(su,sw)$, $w \notin X_I$, by $w(\Y) \cap \o{X_I}=\emptyset$ and $w(\X) \cap Y_I = \emptyset$, we have $w \in Y_O \setminus X_I = Y_O \cap \o{X_I} = (w(\Y)\cup Y_I)\cap \o{X_I} = Y_I \cap \o{X_I} = Y_I \cap (w(\X)\cup\o{X_O}) = Y_I \cap \o{X_O}.$ Thus, since $\Y$ is non-trivial, $\o{\X} \sqcap \Y$ is non-trivial.
\medskip

\eqref{X_cap_oY_not_trivial} Suppose that $\X \sqcap \o{\Y}$ is  trivial. Since  $\X$ is non-trivial,  $X_I \cap \o{Y_O}= \emptyset$, that is $X_I \subseteq Y_O =w(\Y) \cup Y_I$. By $w(\Y) \cap X_I = \emptyset$, $X_I \subseteq Y_I.$  Then, by $w(\X) \cap \o{Y_I}= \emptyset$, $w(\X)\subseteq Y_I$ and $X_O=X_I\cup w(\X)\subseteq Y_I \subseteq Y_O.$ By maximality of $\X$, $\X = \Y$ and $w(Y)=\emptyset$. Since $Y$ blocks $(su,sw)$, $w \in Y_O = Y_I = X_I,$ a contradiction to the choice of $w$. 
\medskip

\eqref{X_cup_Y_not_trivial}
Since $\Y$ blocks $(su,sw)$, by the choice of $w$ and $w(\Y) \cap \o{X_I}=\emptyset$, we have $w \in Y_O \cap \o{X_I} = Y_I \cap \o{X_I}.$ Hence $\X \sqcup \Y$ strictly contains $\X$.
Since  $\X$ is non-trivial and by the conditions, it remains to prove that $V \neq X_O \cup Y_O = X_I \cup Y_I.$ We have $d_H(s,X_I \cup Y_I) = d_H(s,X_I) + d_H(s,Y_I) - d_H(s,X_I \cap Y_I).$ 
If $\Y$ is critical then, by \eqref{neigbors_of_s} and $d_H(s)$ even, $d_H(s,X_I)+ d_H(s,Y_I)\leq \frac{1}{2}d_H(s) +\frac{1}{2}d_H(s)-1<d_H(s,V).$
If $\Y$ is dangerous then $u \in X_I \cap Y_I\cap N_H(s)$, hence by \eqref{neigbors_of_s} and $d_H(s)$ even, $d_H(s,X_I) + d_H(s,Y_I) - d_H(s,X_I \cap Y_I) \leq \frac{1}{2}d_H(s) + \frac{1}{2}d_H(s)-1 < d_H(s,V).$
 In both cases we have $X_I \cup Y_I \neq V.$
\end{proof}
\begin{claim} \label{X_Y_not_both_pair}
 $\X$ and $\Y$ are not both sets.
\end{claim}
\begin{proof} Suppose $\X$ and $\Y$ are both sets. By Facts \eqref{X_cap_Y_not_trivial}, \eqref{X_cap_oY_not_trivial}, \eqref{oX_cap_Y_not_trivial} and \eqref{X_cup_Y_not_trivial}, we have $u \in Y_I \cap X_I \cap N_H(s)$ and $\X \sqcap \Y$, $\X \sqcap \o{\Y}$, $\o{\X} \sqcap \Y$ and $\X \sqcup \Y$ are non-trivial and $\X \sqsubset\X \sqcup \Y$. Hence by (\ref{main_eq}), $(2k,k)$-connectivity of $H$ and maximality of $\X$, we have $(2k+1) + (2k+1) \geq \hat{d}_H(\X) + \hat{d}_H(\o{\Y}) \geq \hat{d}_H(\X \sqcap \o{\Y}) + \hat{d}_H(\o{\X} \sqcap \Y) + 2d_H(s,X_I \cap Y_I) \geq 2k + 2k + 2$ and $(2k+1) + (2k+1) \geq \hat{d}_H(\X) + \hat{d}_H(\Y) \geq \hat{d}_H(\X \sqcap \Y) + \hat{d}_H(\X \sqcup \Y) \geq 2k + (2k + 2).$ It follows that equality holds everywhere, in particular, $\d_H(\X \sqcap \Y)=\d_H(\X \sqcap  \o{\Y}) = 2k$ are even and $\d_H(\X) = 2k+1$ is odd. This contradicts $\d_H(\X) = \d_H(\X \sqcap \Y) + \d_H(\X \sqcap \o{\Y}) -2d_H(X_I \cap Y_I, X_I \cap \o{Y_I}).$
\end{proof}
\begin{claim} \label{X_Y_pairs}
 $\X$ and $\Y$ are both pairs.
\end{claim}
\begin{proof}
Suppose $\X$ or $\Y$ is a set, call it $\A$, by Claim \ref{X_Y_not_both_pair}, the other blocking bi-set is a pair, call it $\B$.

Suppose $w(\B) \cap A_I = \emptyset$. Then, since $\A$ is a set, $\A \sqcap \B$ and $\A \sqcap \o{\B}$ are sets. By Facts \ref{facts} \eqref{X_cap_Y_not_trivial} and \eqref{X_cap_oY_not_trivial} or \eqref{oX_cap_Y_not_trivial}, $\A \sqcap \B$ and $\A \sqcap \o{\B}$ are non-trivial. Then, by $(2k,k)$-connectivity of $H$ in $V$, since the edges between $A_I \setminus B_I$ and $A_I \cap B_I$ enters $\B$ but not $s$ and by \eqref{blocking_ineq}, we have the following contradiction, 
$2k + 2k  \leq  f_H(\A \sqcap \B) + f_H(\A \sqcap \o{\B}) 
= \hat{d}_H(\A \sqcap \B) + \hat{d}_H(\A \sqcap \o{\B})
= \d_H(\A) + 2d_H(A_I \setminus B_I, A_I \cap B_I)
\leq f_H(\A)  + 2(\hat{d}_H(\B)-d_H(s,B_I))
= f_H(\A)  + 2(f_H(\B)-k|w(\B)|-d_H(s,B_I)) 
\leq 2k+1 +2(k-1).$

Hence, by \eqref{w_leq_1} for $\B$, we have $w(\B) \cap \o{A_I} = \emptyset$. Then, since $\A$ is a set, $\A \sqcup \B$ and $\o{\A} \sqcap \B$ are sets. By Fact \ref{facts} \eqref{X_cup_Y_not_trivial} and \eqref{oX_cap_Y_not_trivial} or \eqref{X_cap_oY_not_trivial}, $\A \sqcup \B$ and $\o{\A} \sqcap \B$ are non-trivial and $\X \sqsubset\A \sqcup \B$. We have ($\star$) $\d_H(\B)-d_H(s,A_I \cap B_I) \leq k$ since $\B$ is a blocking pair and, if $\B$ is dangerous then, $u \in A_I \cap B_I$.  By  maximality of $\X$, the $(2k,k)$-connectivity of $H$, since $\A$ is a blocking set and since the edges between $\o{A_I \cup B_I}$ and $B_I \setminus A_I$ enters $\B$ but not $A_I \cap B_I$, by \eqref{blocking_ineq} and ($\star$), we have the following contradiction,
$(2k+2)+2k
  \leq  f_H(\A \sqcup \B) + f_H(\o{\A} \sqcap \B)
  =  d_H(A_I \cup B_I) + d_H(B_I \setminus A_I)
  =  d_H(\o{A_I \cup B_I}) + d_H(B_I \setminus A_I)
  =  d_H(\o{A_I}) + 2d_H(\o{A_I \cup B_I},B_I \setminus A_I)
  \leq f_H(\A) +2(\d_H(\B) - d_H(s,A_I \cap B_I))
  \leq (2k+1) + 2k.$
\end{proof}
\begin{claim}\label{wall}
 $\Y$ and $\X$ have the same wall.
\end{claim}
\begin{proof} Suppose $w(\X) \neq w(\Y)$. By Claim \ref{X_Y_pairs} and \eqref{w_leq_1}, both $w(\X)$ and $w(\Y)$ are singletons, we have 4 cases.
  \medskip

  \textbf{Case 1} If $w(\X) \cap Y_I = \emptyset = w(\Y) \cap X_I$. Then $|w(\X \sqcup \Y)|  = 2$. By Fact \eqref{X_cap_Y_not_trivial}, $\X\sqcap \Y$ is a non-trivial bi-set of $V$. Hence, by \eqref{pairs}, since $\X$ and $\Y$ are blocking bi-sets, by \eqref{blocking_ineq} and the choice of $w$, we have the following contradiction, $\d_H(\X \sqcup \Y) \leq (f_H(\X) -2k) + (f_H(\Y)-2k) \leq (d_H(s,X_I)-1) + d_H(s,Y_I \setminus X_I) = d_H(s,X_I \cup Y_I) - 1 \leq \d_H(\X \sqcup \Y) -1.$
  \medskip

  \textbf{Case 2} If $w(\X) \cap Y_I = \emptyset = w(\Y) \cap \o{X_I}$. Then $|w(\o{\X} \sqcup \Y)|=2$.  By Fact \eqref{oX_cap_Y_not_trivial}, $\o{\X} \sqcap \Y$ is a non-trivial bi-set of $V$. By symmetry of $f_H$ and \eqref{blocking_ineq}, $f_H(\o{\X}) -2k = f_H(\X) - 2k < d_H(s,X_I) \leq \d_H(\o{\X} \sqcup \Y) + d_H(X_I \cap Y_O,\o{X_O} \cap \o{Y_O}).$ If $\Y$ is dangerous, then $u \in X_I \cap Y_I$. Hence $f_H(\Y)-2k \leq d(s,X_I \cap Y_I) \leq d_H(\o{Y_O} \cap \o{X_I},Y_I \cap X_O).$ So we have, $(f_H(\o{\X}) -2k)+(f_H(\Y) -2k)<\d_H(\o{\X} \sqcup \Y) + d_H(X_I \cap Y_O,\o{X_O} \cap \o{Y_O}) + d_H(\o{Y_O} \cap \o{X_I},Y_I \cap X_O)$ and this contradicts \eqref{pairs}.
  \medskip

  \textbf{Case 3} If $w(\X) \cap \o{Y_I} = \emptyset = w(\Y) \cap X_I$. Then $|w(\X \sqcup \o{\Y})|=2$. By Fact \eqref{X_cap_oY_not_trivial}, $\X \sqcap \o{\Y}$ is a non-trivial bi-set of $V$. By symmetry of $f_H$ and \eqref{blocking_ineq}, $f_H(\o{\Y})-2k = f_H(\Y)-2k < d_H(s,Y_I) \leq \d_H(\X \sqcup \o{\Y}) + d_H(Y_I \cap X_O, \o{Y_O} \cap \o{X_I}).$ Since $w(\Y) \cap X_I = \emptyset$, we have $u \in Y_I \cap X_I.$ Hence $f_H(\X) - 2k \leq d(s,X_I \cap Y_I) \leq d_H(\o{X_O} \cap \o{Y_I},X_I \cap Y_O).$ So we have, $(f_H(\X) -2k)+(f_H(\o{\Y}) -2k)<\d_H(\X \sqcup \o{\Y}) + d_H(\o{X_O} \cap \o{Y_I},X_I \cap Y_O) + d_H(Y_I \cap X_O, \o{Y_O} \cap \o{X_I})$ and this contradicts \eqref{pairs}.
  \medskip

  \textbf{Case 4} If $w(\X) \cap \o{Y_I} = \emptyset = w(\Y) \cap \o{X_I}$. Then $|w(\X \sqcap \Y)|=2$. By Fact \eqref{X_cup_Y_not_trivial}, $\X \sqcup \Y$ is a non-trivial bi-set of $V$ and $\X \sqsubset\X \sqcup \Y$. If $\Y$ is dangerous, then $u \in X_I \cap Y_I$, thus, since $\X$ is a blocking bi-set, $1+\d_H(\X \sqcap \Y) \geq 1+d_H(s,X_I \cap Y_I) \geq (f_H(\X)-2k) + (f_H(\Y)-2k)$. By maximality of $\X$ and submodularity of $f_H$, we have the following contradiction, $2k+2 \leq f_H(\X \sqcup \Y) \leq f_H(\X) + f_H(\Y) - f_H(\X \sqcap \Y) \leq \d_H(\X \sqcap \Y) + 1 + 4k - f_H(\X \sqcap \Y) = 1 + 2k.$\end{proof} 
  
  Claims \ref{X_Y_pairs} and \ref{wall} prove Lemma \ref{main_lemma}.
  \end{proof}

\begin{prop} \label{non_trivial}
 Let $H=(V+s,E)$ be a $(2k,k)$-connected graph in $V$ with $d_H(s)$ even and
 $\X$ and $\Y$ two critical pairs with wall $\{w\}$ such that $d_H(s,w)$ is odd. Then $N_H(s) \setminus (X_O \cup Y_O)$ is non-empty.
 In particular, $\X \sqcup \Y$ is a non-trivial bi-set of $V$.
\end{prop}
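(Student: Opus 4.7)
The plan is to argue by contradiction: suppose $N_H(s)\subseteq X_O\cup Y_O$. Since $\X$ and $\Y$ are pairs with common wall $\{w\}$, we have $w\notin X_I\cup Y_I$, so $X_O\cup Y_O=(X_I\cup Y_I)\sqcup\{w\}$ (disjoint union). Hence under the assumption,
\[
d_H(s)=d_H(s,X_I\cup Y_I)+d_H(s,w).
\]

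The second step is to bound $d_H(s,X_I)$ and $d_H(s,Y_I)$ using the inequality (\ref{neigbors_of_s}) proved in the first claim. Since $\X$ and $\Y$ are critical, $f_H(\X)-2k=f_H(\Y)-2k=0$, and $|w(\X)|=|w(\Y)|=1$ with wall $\{w\}$, so (\ref{neigbors_of_s}) yields
\[
d_H(s,X_I),\ d_H(s,Y_I)\ \leq\ \left\lfloor\tfrac12\bigl(d_H(s)-d_H(s,w)\bigr)\right\rfloor.
\]
This is where the parity hypothesis is essential: because $d_H(s)$ is even and $d_H(s,w)$ is odd, the quantity $d_H(s)-d_H(s,w)$ is odd, so the floor strictly drops and each of $d_H(s,X_I)$ and $d_H(s,Y_I)$ is at most $\tfrac12(d_H(s)-d_H(s,w)-1)$. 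Adding these two bounds and using $d_H(s,X_I\cup Y_I)\leq d_H(s,X_I)+d_H(s,Y_I)$ gives
\[
d_H(s,X_I\cup Y_I)\ \leq\ d_H(s)-d_H(s,w)-1,
\]
which combined with the first displayed equation yields $d_H(s)\leq d_H(s)-1$, a contradiction. Hence $N_H(s)\setminus(X_O\cup Y_O)\neq\emptyset$.

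For the ``in particular'' part, note that $\X\sqcup\Y=(X_O\cup Y_O,\,X_I\cup Y_I)$. Its inner set is non-empty since $\X$ is non-trivial; moreover $s\notin X_O\cup Y_O$ because $\X$ and $\Y$ are bi-sets of $V$. The only remaining obstruction to non-triviality in $V$ is $X_O\cup Y_O=V$, which is exactly ruled out by the neighbour of $s$ produced above.

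The argument is short, and the only real ``trick'' is noticing that the parity mismatch forces the floor in (\ref{neigbors_of_s}) to lose an extra $\tfrac12$ on each side, and that this total loss of $1$ is precisely what allows the two bounds to combine strictly; so there is no significant obstacle, just the need to invoke the right previously proved inequality and track parities carefully.
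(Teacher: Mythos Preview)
Your proof is correct and follows essentially the same approach as the paper: both arguments use \eqref{neigbors_of_s} on each of the two critical pairs, exploit the parity mismatch between $d_H(s)$ and $d_H(s,w)$ to gain a strict inequality, and combine with subadditivity of $d_H(s,\cdot)$ to conclude $d_H(s,X_O\cup Y_O)<d_H(s)$. The only cosmetic difference is that you frame it as a contradiction while the paper writes the same chain of inequalities directly.
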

\begin{proof}
 By \eqref{neigbors_of_s}, $d_H(s)$ even and $d_H(s,w)$ odd, we have 
$d_H(s,X_O \cup Y_O) = d_H(s,X_I \cup Y_I) + d_H(s,w) \leq d_H(s,X_I) + d_H(s,Y_I) + d_H(s,w) < \frac{1}{2}\big(d_H(s) - d_H(s,w)\big) + \frac{1}{2}\big(d_H(s) - d_H(s,w)\big) + d_H(s,w) = d_H(s).$ 
Hence,  there exists a neighbor of $s$ in $V \setminus (X_O \cup Y_O)$ that is $\X \sqcup \Y$ is non-trivial.
\end{proof}

\begin{claim} \label{neigbors_of_s3}
 Let $H=(V+s,E)$ be a a graph $(2k,k)$-connected in $V$ with $d_H(s)$ even.
 Let $\X$ be a maximal blocking bi-set for $(su,su)$ where $u \in V$ such that $d_H(s,u) \geq \frac{d_H(s)}{2}$. Then the pair $(su,sv)$ is splittable for all $v \in N_H(s) \setminus X_O$.
\end{claim}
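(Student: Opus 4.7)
The plan is to argue by contradiction, use the structural information from Lemma \ref{main_lemma} to pin down the shape of any blocking bi-set for $(su,sv)$, and then derive a numerical contradiction from the hypothesis $d_H(s,u) \geq \tfrac{1}{2}d_H(s)$ via the bound \eqref{neigbors_of_s}.

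First, I would clarify what $\X$ looks like. Since $\X$ blocks the pair $(su,su)$, condition (b) of the definition is impossible (it would require $u \in X_I$ and $u = w(\X)$, which are disjoint), so (a) holds: $u \in X_I$ and $f_H(\X) \leq 2k+1$, i.e.\ $\X$ is dangerous. Next, assume for contradiction that $(su,sv)$ is non-admissible. By Lemma \ref{blocking_bi-set} there is a blocking bi-set $\Y$ for $(su,sv)$. Since $v \in N_H(s) \setminus X_O \subseteq N_H(s) \setminus X_I$, all hypotheses of Lemma \ref{main_lemma} hold (with $w := v$ playing the role of the outside neighbor), so $\X$ and $\Y$ are both pairs sharing a common wall $\{w\}$.

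The second step is to verify that $\Y$ must also be dangerous. We have $u \neq w$ (since $u \in X_I$ is disjoint from $w(\X) = \{w\}$) and $v \neq w$ (since $v \notin X_O \supseteq w(\X)$), so neither $u$ nor $v$ can equal $w(\Y)$; hence case (b) of the blocking definition is ruled out for $\Y$, forcing $u,v \in Y_I$ and $f_H(\Y) \leq 2k+1$.

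The final step is the numerical contradiction. Apply \eqref{neigbors_of_s} to $\X$: because $u \in X_I$ and $d_H(s,u) \geq \tfrac{1}{2}d_H(s)$, we have $d_H(s,X_I) \geq \tfrac{1}{2}d_H(s)$, while the right-hand side is at most $\lfloor \tfrac{1}{2}(d_H(s) - d_H(s,w) + 1)\rfloor$; using parity of $d_H(s)$, this forces $d_H(s,w) \leq 1$ (even case gives $d_H(s,w) = 0$, odd case gives $d_H(s,w) = 1$). Now apply \eqref{neigbors_of_s} to $\Y$: from $u,v \in Y_I$ with $u \neq v$ and $v \in N_H(s)$ we get $d_H(s,Y_I) \geq \tfrac{1}{2}d_H(s) + 1$, but the bound yields $d_H(s,Y_I) \leq \lfloor \tfrac{1}{2}(d_H(s) - d_H(s,w) + 1)\rfloor \leq \tfrac{1}{2}d_H(s)$ for $d_H(s,w) \in \{0,1\}$, the desired contradiction.

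The main obstacle is the structural step: one must invoke Lemma \ref{main_lemma} with the right data and then carefully exclude the critical case for $\Y$ using only the membership information $v \notin X_O$. Once the walls are identified and $\Y$ is known to be dangerous, the arithmetic on \eqref{neigbors_of_s} is routine and the degree hypothesis on $u$ closes the argument.
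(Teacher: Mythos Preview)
Your proof is correct and follows the same approach as the paper: invoke Lemma \ref{main_lemma} to force $\Y$ to share the wall of $\X$, deduce that $\Y$ is dangerous with $u,v\in Y_I$, and obtain the contradiction from \eqref{neigbors_of_s}. Your intermediate step bounding $d_H(s,w)$ via $\X$ is harmless but unnecessary, since $d_H(s,w(\Y))\geq 0$ already yields $d_H(s,Y_I)\leq \lfloor\tfrac{1}{2}(d_H(s)+1)\rfloor=\tfrac{1}{2}d_H(s)$ directly from \eqref{neigbors_of_s} applied to $\Y$.
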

\begin{proof}
Since $\X$ is obviously dangerous and $v \in N_H(s) \setminus X_O$, $w(\X) \cap \{u,v\}=\emptyset$.
 Suppose that $(su,sv)$ is non-admissible, that is by Lemma \ref{blocking_bi-set}, there exists a bi-set $\Y$ blocking the pair $(su,sv)$.  Hence, since $\X$ and $\Y$ are both pairs with the same wall by Lemma \ref{main_lemma}, $v,u \in Y_I$. This gives $d_H(s,Y_I) \geq d_H(s,u) + d_H(s,v) \geq \frac{d_H(s)}{2} + 1$. This contradicts \eqref{neigbors_of_s}.
\end{proof}

\section{Obstacles}

Let $H=(V+s,E)$ be a $(2k,k)$-connected graph in $V$ such that $d_H(s)$ is even. We extend the definition of Jord\'an \cite{Jordan_2006} as follows. The pair $(t,\C)$ is called a \emph{$t$-star obstacle} at $s$ (for short, an \emph{obstacle}) if
\begin{eqnarray}
 & & t \textrm{ is a neighbor of $s$ with } d_H(s,t) \textrm{ odd},\label{eq1}\\
 & & \C \textrm{ is a collection of critical pairs,}\label{eq5}\\
 & & \textrm{each element of $\C$ has wall } \{t\},\label{eq2}\\
 & & \textrm{the elements of }\C\textrm{ are pairwise innerly-disjoint,}\label{eq3}\\
 & & N_H(s)\setminus\{t\} \subseteq V_I(\C).\label{eq4}
\end{eqnarray}
If $(t,\C)$ is an obstacle at $s$, note that, by Lemma \ref{blocking_bi-set}, no pair $(st,su)$ with $u \in N_H(s) - t$ is admissible. Some basic properties of obstacles are proven in the following proposition.

\begin{prop}
 Let $H=(V+s,E)$ be a $(2k,k)$-connected graph in $V$ with $d_H(s)$ even and $(t,\C)$ an obstacle at $s$. Then
\begin{eqnarray}
 & & |\C| \geq 3,\label{C_geq_3}\\
 & & H-st \textrm{ is $(2k,k)$-connected in $V$.} \label{st_removable}
\end{eqnarray}
\end{prop}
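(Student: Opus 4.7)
For \eqref{C_geq_3} I will apply \eqref{neigbors_of_s} to each critical pair $\Y\in\C$: since $f_H(\Y)=2k$ and $w(\Y)=\{t\}$, the bound reads $d_H(s,Y_I)\leq\lfloor(d_H(s)-d_H(s,t))/2\rfloor$, and the parity assumptions (i.e., $d_H(s)$ even and $d_H(s,t)$ odd by \eqref{eq1}) sharpen the floor to $(d_H(s)-d_H(s,t)-1)/2$. Summing over $\C$ and using \eqref{eq3} (innerly-disjointness) and \eqref{eq4} (covering $N_H(s)\setminus\{t\}$) gives $\sum_{\Y\in\C}d_H(s,Y_I)=d_H(s)-d_H(s,t)$, hence $|\C|(D-1)/2\geq D$ where $D:=d_H(s)-d_H(s,t)\geq 1$. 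Since $D$ is odd and $D=1$ is impossible, $D\geq 3$ and $|\C|\geq 2D/(D-1)>2$, i.e., $|\C|\geq 3$.

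For \eqref{st_removable} I argue by contradiction: if $H-st$ is not $(2k,k)$-connected in $V$, then since $st$ enters a bi-set $\X$ of $V$ exactly when $t\in X_I$, there is a non-trivial bi-set $\X$ of $V$ with $f_H(\X)=2k$ and $t\in X_I$; fix $\X$ maximal with respect to $\sqsubseteq$. The identity $\hat d_H(\X)=2k-k|w(\X)|\geq d_H(s,t)\geq 1$ forces $|w(\X)|\leq 1$, and by \eqref{plusineq} some neighbor $w\in N_H(s)\setminus X_O$ exists; since $w\neq t$, \eqref{eq4} places $w$ in $Y_j^I$ for some $\Y_j\in\C$. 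I then split on the pair $(X_I\cap Y_j^I,\,X_O\cup Y_j^O)$. \emph{Case (a):} $X_I\cap Y_j^I\neq\emptyset$ and $X_O\cup Y_j^O\neq V$. Claim \ref{tight_intersection} gives $\X\sqcup\Y_j$ tight; because $w\in Y_j^I\setminus X_I$ it strictly contains $\X$ while still having $t$ in its inner, contradicting maximality. \emph{Case (b):} $X_I\cap Y_j^I=\emptyset$ and $X_O\cup Y_j^O\neq V$. Submodularity gives $k|X_O\cap Y_j^O|+f_H(\X\sqcup\Y_j)\leq 4k$, and maximality forces $f_H(\X\sqcup\Y_j)\geq 2k+1$, so $X_O\cap Y_j^O=\{t\}$ and $Y_j^I\subseteq V\setminus X_O$. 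Combined with $d_H(t,Y_j^I)\geq k$ (from $d_H(Y_j^I)=k+d_H(t,Y_j^I)\geq 2k$) and the tightness bound $d_H(t,V\setminus X_O)\leq\hat d_H(\X)-d_H(s,t)$, this gives a direct contradiction for $|w(\X)|=1$; for $|w(\X)|=0$ one computes $f_H(\X\sqcup\Y_j)=d_H(X_I\cup Y_j^I)\leq 3k-d_H(t,Y_j^I)\leq 2k$, contradicting $\geq 2k+1$.

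\emph{Case (c):} $X_O\cup Y_j^O=V$. This is the delicate case and is where I expect the main obstacle, since $\X\sqcup\Y_j$ is then trivial in $V$ and Claim \ref{tight_intersection} cannot be applied to $(\X,\Y_j)$ directly. The idea is to apply Claim \ref{tight_intersection} instead to the (also tight, by symmetry of $f_H$) pair $(\o{\X},\Y_j)$: the intersection $\o{\X}\sqcap\Y_j$ is a non-trivial bi-set of $V$ (its inner is $V\setminus X_O$), and $\o{\o{\X}\sqcup\Y_j}=\X\sqcap\o{\Y_j}$ is also a non-trivial bi-set of $V$ as long as $X_I\not\subseteq Y_j^O$. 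In that generic sub-case the cross-edge equality from Claim \ref{tight_intersection} becomes $d_H(X_I\cap Y_j^O,\{s\})=0$, contradicting $d_H(s,t)\geq 1$ because $t\in X_I\cap Y_j^O$. The exceptional sub-case $X_I\subseteq Y_j^O$ forces $X_I\cap Y_i^I=\emptyset$ for every $i\neq j$ by innerly-disjointness (and $X_O\cup Y_i^O\neq V$ since $V\setminus X_O\subseteq Y_j^I$ is disjoint from $Y_i^I$), and the Case (b) analysis applied to any such $\Y_i$, which exists by \eqref{C_geq_3}, delivers the contradiction.
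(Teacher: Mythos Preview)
Your argument is correct on both counts, but it diverges from the paper's route in interesting ways.

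For \eqref{C_geq_3} the paper invokes Proposition~\ref{non_trivial}: for any two (possibly equal) critical pairs $\X,\Y\in\C$ with wall $\{t\}$ and $d_H(s,t)$ odd, there is a neighbour of $s$ outside $X_O\cup Y_O$, which forces a third element of $\C$. Your counting via \eqref{neigbors_of_s} is the same inequality unpacked globally: summing $d_H(s,Y_I)\le (D-1)/2$ over $\C$ against $\sum_{\Y\in\C}d_H(s,Y_I)=D$ gives $|\C|\ge 2D/(D-1)>2$. This is a clean self-contained replacement for the appeal to Proposition~\ref{non_trivial}.

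For \eqref{st_removable} the two proofs genuinely differ. The paper takes \emph{any} tight $\X$ with $t\in X_I$, first disposes of $X_I=\{t\}$ using $d_H(t,Y_I)\ge k$ summed over $\C$, and then uncrosses $\X$ against \emph{every} $\Y\in\C$ simultaneously (via Claim~\ref{tight_intersection} applied to $\o\X,\Y$) to conclude that for each $\Y$ either $Y_I\subseteq X_O$ or $X_I\subseteq Y_O$; the two global alternatives are then killed by \eqref{neigbors_of_s} and by \eqref{C_geq_3}+\eqref{eq3} respectively. You instead fix a \emph{maximal} tight $\X$ with $t\in X_I$ and work against a single $\Y_j\in\C$ picked to contain a neighbour $w\in N_H(s)\setminus X_O$. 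Maximality replaces the global dichotomy: in Case~(a) the tight union $\X\sqcup\Y_j$ contradicts maximality directly; in Case~(b) maximality forces $f_H(\X\sqcup\Y_j)\ge 2k+1$, after which a short degree computation (the same $d_H(t,Y_I)\ge k$ bound) finishes; Case~(c) is the paper's cross-term trick $d_H(X_I\cap Y_j^O,\{s\})=0$ applied to $(\o\X,\Y_j)$, with the residual sub-case $X_I\subseteq Y_j^O$ thrown back to Case~(b) using another $\Y_i\in\C$ (here \eqref{C_geq_3} is essential, just as in the paper). Your proof trades the paper's uniform uncrossing over all of $\C$ for a maximality device plus a three-way case split; it is slightly longer but avoids the separate treatment of $X_I=\{t\}$ and makes the role of maximality transparent.
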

%
\begin{proof}
 \eqref{C_geq_3}: By \eqref{eq4}, \eqref{eq1} and $d_H(s)$ even, $|\C| \geq 1$. 
 Let $\X$ and $\Y$ be two (not necessarily distinct) elements of $\C$. By \eqref{eq5}, \eqref{eq2}, \eqref{eq1} and Proposition \ref{non_trivial}, $N_H(s) \setminus (X_O \cup Y_O)$ is non-empty. Thus, by \eqref{eq4}, there exists an element in $\C \setminus \{\X,\Y\}$.
\medskip

 \eqref{st_removable}: Suppose that $H-st$ is not $(2k,k)$-connected in $V$, that is, by $(2k,k)$-connectivity of $H$, there exists in $H$ a non-trivial tight bi-set $\X$ of $V$ such that $t \in X_I$. Note that, by \eqref{eq1}, $|w(\X)|\leq1.$

 For all $\Y \in \C$, since $H$ is $(2k,k)$-connected in $V$ and $\Y$ is critical, $d_H(t,Y_I) = d_H(Y_I) - (f_H(Y_I)-k|w(\Y)|) \geq 2k - (2k-k) =k$. If $X_I = \{t\}$ then, by tightness of $\X$, \eqref{eq3}, \eqref{C_geq_3}, \eqref{eq1} and $|w(\X)|\leq1$, we have the following contradiction
$ 2k = f_H(\X)
 = \d_H(\X) + k|w(\X)|
 \geq \d_H(\X)
 = d_H(X_I) - d_H(X_I,w(\X))
 = d_H(t) - d_H(t,w(\X))
 \geq d_H(t,s) + \sum_{Y \in \C, w(\X) \notin Y_I}d_H(t,Y_I)
 \geq 1 + 2k.$
So $X_I \neq \{t\}$.

 Suppose that there exists $\Y \in \C$ such that $\o{X} \sqcap \Y$ and $\X \sqcap \o{\Y}$ are both non-trivial bi-sets of $V$. Then, since $\X$ is tight, $\Y$ is critical, by symmetry of $f_H$, \eqref{main_eq} and $(2k,k)$-connectivity of $H$ in $V$ and  Claim \ref{tight_intersection}, we have the following contradiction, $0=d_H(X_I \cap Y_O,\o{X_O} \cap \o{Y_I})\geq d_H(s,t)\geq 1.$
 Hence, for all $\Y \in \C$, $\o{X} \sqcap \Y$ or $\X \sqcap \o{\Y}$ is trivial, that is, since $\X$ and $\Y$ are non-trivial, $Y_I \subseteq X_O$ or $X_I \subseteq Y_O$.

If, for all $\Y \in \C$, $Y_I \subseteq X_O$ then, by $t \in X_I$ and \eqref{eq4}, $N_H(s) \subseteq X_I$.  This, by the tightness of $\X$,  contradicts \eqref{neigbors_of_s}.
So there exists $\Y \in \C$ such that $X_I \subseteq Y_O.$
 By \eqref{C_geq_3}, there exist at least two distinct elements $\A,\B \in \C\setminus \Y.$
 Since $X_I\neq \{t\}$, $X_I \subseteq Y_O$ and \eqref{eq3}, we have 
 $A_I,B_I \subseteq (X_O \setminus \{t\}) \cap \o{Y_I} = (X_I \cup w(\X)) \cap \o{Y_O} \subseteq w(\X),$ a contradiction to $|w(\X)| \leq 1$. 
\end{proof}

The following lemma shows that to find an obstacle one does not have to focus on the disjointness of the inner-sets.

\begin{lemma} \label{uncrossing}
 Let $H=(V+s,E)$ be a $(2k,k)$-connected graph in $V$ with $d_H(s)$ even. If there exists a pair $(t,\F)$ satisfying \eqref{eq1}, \eqref{eq5}, \eqref{eq2} and \eqref{eq4} then there exists a $t$-star obstacle at $s$.
\end{lemma}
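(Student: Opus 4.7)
\medskip

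\noindent\textbf{Proof plan for Lemma \ref{uncrossing}.} The plan is a standard uncrossing argument. Among all pairs $(t,\F)$ with the given $t$ satisfying \eqref{eq1}, \eqref{eq5}, \eqref{eq2} and \eqref{eq4}, choose one that minimises $|\F|$. I will show that such a minimal $\F$ is automatically pairwise innerly-disjoint, which upgrades it to a $t$-star obstacle.

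Suppose for contradiction there are two distinct $\X,\Y\in\F$ with $X_I\cap Y_I\ne\emptyset$. Since $s\notin X_O\cup Y_O$ and the inner-sets meet, $\X\sqcap\Y$ is a non-trivial bi-set of $V$. Because $\X$ and $\Y$ are critical pairs with common wall $\{t\}$ and $d_H(s,t)$ is odd, Proposition \ref{non_trivial} gives that $\X\sqcup\Y$ is also a non-trivial bi-set of $V$. Critical pairs being tight, Claim \ref{tight_intersection} then applies and yields that $\X\sqcup\Y$ is tight. A direct computation shows that the wall of $\X\sqcup\Y$ equals $\{t\}$ (it contains $t$, and since $X_O=X_I\cup\{t\}$ and $Y_O=Y_I\cup\{t\}$ one checks $X_O\cup Y_O\setminus(X_I\cup Y_I)=\{t\}$), so $\X\sqcup\Y$ is a critical pair with wall $\{t\}$.

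Now set $\F'=(\F\setminus\{\X,\Y\})\cup\{\X\sqcup\Y\}$. Conditions \eqref{eq1}, \eqref{eq5} and \eqref{eq2} clearly survive. For \eqref{eq4}, note that $X_I,Y_I\subseteq(\X\sqcup\Y)_I$, hence $V_I(\F')\supseteq V_I(\F)\supseteq N_H(s)\setminus\{t\}$. But $|\F'|\le|\F|-1<|\F|$ (regardless of whether $\X\sqcup\Y$ already belongs to $\F\setminus\{\X,\Y\}$ or not), which contradicts the minimality of $|\F|$. Hence no such crossing pair exists, condition \eqref{eq3} holds, and $(t,\F)$ is a $t$-star obstacle.

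The only delicate point I anticipate is invoking Proposition \ref{non_trivial} to get $\X\sqcup\Y$ non-trivial, and then Claim \ref{tight_intersection} to transfer tightness from $\X,\Y$ to $\X\sqcup\Y$; once both ingredients are in place the wall bookkeeping is a one-line set computation and the minimality argument is routine.
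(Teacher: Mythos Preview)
Your argument is correct, and it is a genuine variant of the paper's proof rather than the same one. Both are uncrossing arguments, but you uncross ``upwards'' by replacing a crossing pair $\X,\Y$ with the single bi-set $\X\sqcup\Y$ and minimise $|\F|$, whereas the paper uncrosses ``downwards'': it minimises $\sum_{\X\in\C}|X_I|$ and replaces $\X,\Y$ by those among $\X\sqcap\Y,\ \X\sqcap\o{\Y},\ \o{\X}\sqcap\Y$ that still carry a neighbour of $s$. Your route is shorter because it produces only one replacement bi-set and the wall computation $w(\X\sqcup\Y)=\{t\}$ is immediate; the paper's route has the mild advantage that it yields innerly-disjoint pieces that are \emph{smaller}, which matches how the lemma is used later (in the ``finest obstacle'' argument of Lemma~\ref{finest_obstacle}), but for the bare statement of Lemma~\ref{uncrossing} either direction works.

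Two cosmetic remarks. First, your justification that $\X\sqcap\Y$ is non-trivial via ``$s\notin X_O\cup Y_O$'' is beside the point: $\X$ and $\Y$ are bi-sets of $V$, so $s$ is not in the ground set; what you actually need is $X_O\cap Y_O\subseteq X_O\subsetneq V$, which follows from $\X$ being non-trivial. Second, when you call $\X\sqcup\Y$ a ``critical pair'' you are implicitly using that its inner set contains a neighbour of $s$ (so that it blocks some pair $(st,su)$); this is inherited from $X_I$, but it is worth saying explicitly.
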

\begin{proof}
  The proof applies the uncrossing method. Choose a pair $(t,\C)$ satisfying \eqref{eq1}, \eqref{eq5}, \eqref{eq2} and \eqref{eq4} such that $\sum_{\X \in \C}|X_I|$ is minimal.
  Suppose there exist two distinct elements $\X$ and $\Y$  in $\C$ such that $X_I \cap Y_I \neq \emptyset$ that is $\X \sqcap \Y$ is a non-trivial bi-set of $V$.
  By choice of $\C$, $\X \sqsubseteq \Y$ or $\Y \sqsubseteq \X$ is not possible. Hence, by \eqref{eq2}, $\X \sqcap \o{\Y}$ and $\o{\X} \sqcap \Y$ are non-trivial bi-sets of $V$. By Proposition \ref{non_trivial}, $\X \sqcup \Y$ is a non-trivial bi-set of $V$.

  Note that critical pairs are tight non-trivial bi-sets of $V$.
  Hence, by Claim \ref{tight_intersection}, $\X \sqcap \Y$, $\X \sqcap \o{\Y}$ and $\o{\X} \sqcap \Y$ are tight. The bi-sets among them which contain a neighbor of $s$ are critical pairs with wall $t$. Hence they can replace $\X$ and $\Y$ in $\C$ contradicting the minimality of $\sum_{\X \in \C}|X_I|$.
\end{proof}

\section{A new splitting-off theorem}

The first result of this section concerns the case when no admissible splitting-off exists. 

\begin{theo} \label{firstobstacle}
 Let $H=(V+s,E)$ be a graph that is $(2k,k)$-connected in $V$ with $d_H(s)$ even and $k \geq 2$. If there exists no admissible splitting-off at $s$ then $d_H(s)=4$ and there exists an obstacle at $s$.
\end{theo}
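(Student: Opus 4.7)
The plan is to exploit Lemma \ref{main_lemma} to force all blocking bi-sets to share a common wall vertex $t \in V$, to construct a family $\F$ of critical pairs with wall $\{t\}$ covering $N_H(s) \setminus \{t\}$, to apply Lemma \ref{uncrossing} to extract an honest obstacle, to verify $d_H(s,t)$ is odd by parity, and finally to pin down $d_H(s) = 4$ via a tight degree count. First I would establish two preliminary bounds. We have $|N_H(s)| \geq 2$: if $N_H(s) = \{u\}$, the unique pair $(su, su)$ is blocked by some $\X$ with $u \in X_I$ (hence $N_H(s) \subseteq X_I$), and \eqref{neigbors_of_s} yields $d_H(s) = d_H(s, X_I) \leq \lfloor (d_H(s)+1)/2 \rfloor$, impossible. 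Similarly, no $u \in N_H(s)$ satisfies $d_H(s,u) \geq d_H(s)/2$: such a $u$, via Claim \ref{neigbors_of_s3} applied to a maximal blocking bi-set $\X$ of $(su, su)$, must satisfy $N_H(s) \subseteq X_O$ (otherwise an admissible pair would exist), and then \eqref{neigbors_of_s} forces $d_H(s,u) \leq d_H(s, X_I) \leq 1$, contradicting $d_H(s,u) \geq 2$. In particular $d_H(s) \geq 4$.

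Next, I identify the wall $t$. Pick any $u \in N_H(s)$ and $v \in N_H(s) \setminus \{u\}$, and let $\X$ be a maximal blocking bi-set for $(su, sv)$ with $u \in X_I$. Since $d_H(s,w) < d_H(s)/2$ for every $w$, $N_H(s) \not\subseteq X_I$, so for any chosen $w \in N_H(s) \setminus X_I$, Lemma \ref{main_lemma} applied to $\X$ and a blocking bi-set of $(su, sw)$ forces $\X$ to be a pair with wall $\{t\}$ for some $t \in V$. Re-running the argument from alternate starting pairs and invoking Lemma \ref{main_lemma} again makes $t$ independent of choice. Then for each $w \in N_H(s) \setminus \{t\}$, the non-admissible pair $(sw, st)$ admits a blocking bi-set $\Z_w$; wall-rigidity forces $w(\Z_w) = \{t\}$, and since $t$ lies on the wall, $\Z_w$ falls into case (b) of the blocking definition, hence is critical with $w \in (Z_w)_I$. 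The family $\F = \{\Z_w : w \in N_H(s) \setminus \{t\}\}$ therefore satisfies \eqref{eq5}, \eqref{eq2}, \eqref{eq4}.

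The last and most delicate step verifies $d_H(s,t)$ is odd and concludes $d_H(s) = 4$. A parity argument, using \eqref{neigbors_of_s} on each critical $\Z_w$ together with wall-rigidity applied to the loop pair $(st, st)$ when $d_H(s, t) \geq 2$, and ruling out $t \notin N_H(s)$ by a similar contradiction, establishes $d_H(s,t)$ odd, so Lemma \ref{uncrossing} yields a genuine obstacle $(t, \C)$ with $|\C| \geq 3$ by \eqref{C_geq_3}. Writing $a := d_H(s,t)$ and $m := d_H(s) - a$ (both odd, with $a < m$ from $d_H(s,t) < d_H(s)/2$), summing the per-pair bound $d_H(s, Z_I) \leq (m-1)/2$ over the inner-disjoint $\C$ yields $m \leq |\C|(m-1)/2$; combining this with an upper bound on $|\C|$ obtained by observing that two neighbors in distinct inner-sets of $\C$ would otherwise either admit an admissible split or force a dangerous bi-set incompatible with the rigidly fixed wall $\{t\}$, together with $a \geq 1$ odd, the only compatible solution is $a = 1$, $m = 3$, $|\C| = 3$, hence $d_H(s) = 4$. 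The main technical obstacle is precisely this closing interlocking step: extracting the upper bound on $|\C|$, verifying the parity of $d_H(s,t)$, and executing the final tight degree squeeze.
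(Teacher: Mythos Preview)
Your construction of the family $\F$ of critical pairs with wall $\{t\}$ via Lemma~\ref{main_lemma} is in the right spirit, and indeed parallels what the paper does (though the paper first builds a family of \emph{dangerous} pairs with wall $\{t\}$ and only at the very end passes to their complements to get the critical pairs). The real problem is your final step: the claimed degree squeeze that forces $d_H(s)=4$ does not work.

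The obstacle structure by itself cannot force $d_H(s)=4$. Obstacles exist for every even $d_H(s)\geq 4$ --- that is precisely the content of Theorem~\ref{splitting_th}. Your inequality $m\leq |\C|\,(m-1)/2$ only yields $|\C|\geq 2m/(m-1)>2$, i.e.\ $|\C|\geq 3$, which is just \eqref{C_geq_3}. The ``upper bound on $|\C|$'' you invoke is false: if $w,w'$ lie in distinct inner-sets of $\C$, the pair $(sw,sw')$ is indeed blocked by a dangerous bi-set, and wall-rigidity forces that bi-set to have wall $\{t\}$ --- but a dangerous pair with wall $\{t\}$ containing $w,w'$ in its inner-set is perfectly compatible with everything established so far. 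In fact the paper explicitly constructs exactly such a family $\F$ of dangerous pairs with wall $\{t\}$, one for every pair of neighbours of $s$ in $V\setminus\{t\}$. So nothing here bounds $|\C|$ from above.

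What the paper does instead is the key idea you are missing: it passes to $H-t$, which is $k$-edge-connected in $V-t$. Each dangerous pair $\Z\in\F$ satisfies $d_{H-t}(Z_I)=\d_H(\Z)\leq k+1$, so splitting any pair $(sw,sw')$ with $w,w'\neq t$ drops $d_{H-t}(Z_I)$ below $k$. Hence no splitting at $s$ in $H-t$ preserves $k$-edge-connectivity, and Mader's splitting-off theorem (for $k\geq 2$) forces $d_{H-t}(s)=3$. Combined with $d_H(s)$ even and Claim~\ref{neigbors_of_s3} this gives $d_H(s,t)=1$ and $d_H(s)=4$ in one stroke. This Mader step simultaneously resolves the three loose ends in your sketch: that $t\in N_H(s)$ (needed even to form the pairs $(sw,st)$ you use), that $d_H(s,t)$ is odd (needed as hypothesis \eqref{eq1} for Lemma~\ref{uncrossing}; you cannot extract it from the obstacle without circularity), and that $d_H(s)=4$.
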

\begin{proof}
Suppose that there exists no admissible splitting-off at $s$, that is, by Lemma \ref{blocking_bi-set}, for each pair of edges incident to $s$, there exists a bi-set that blocks it.

Let $\X$ be a maximal blocking bi-set for a pair $(su,sv)$ with $u \in X_I.$ By \eqref{plusineq}, there exists a neighbor $w$ of $s$ in $\o{X_O} \subseteq \o{X_I}.$ Let $\Y$ be a maximal blocking bi-set for the pair $(su,sw)$. By Lemma \ref{main_lemma}, $\X$ and $\Y$ are pairs such that $w(\X)=w(\Y).$ Hence, by choice of $u$ and $w$, $\{w,u\} \cap w(\Y) = \{w,u\} \cap w(\X) = \emptyset$, that is $\Y$ is dangerous. By \eqref{w_leq_1}, $w(\Y)$ is a singleton, let us denote it by $\{t\}.$

For the same reasons, every maximal blocking bi-set for a pair $(sa,sb)$ with $a \in Y_I$ and $b \in N_H(s) \cap \o{Y_O} \neq \emptyset$ is a dangerous pair with wall $\{t\}.$
By repeating this argument one more time, we have that every pair $(sa,sb)$ with $a,b \notin \{t\}$ is blocked by a dangerous pair with wall $\{t\}.$
Hence, there exists a family $\mathcal{F}$ of (maximal) dangerous pairs such that \eqref{eq2} holds for $\mathcal{F}$ and every pair of edges adjacent to $s$ but not $t$ is blocked by an element of $\mathcal{F}.$

Now consider the graph $H-t$ which is, by $(2k,k)$-connectivity in $V$ of $H$, $k$-edge-connected in $V-t$. 
If $(su',sv')$ is a pair of edges in $H-t$ then, by the definition of $\mathcal{F}$, there exists a dangerous pair $\Z \in \mathcal{F}$ such that $u',v' \in \Z_I$ and $w(\Z)=\{t\}.$ Hence $d_{{(H-t)}_{u',v'}}(Z_I) = \d_{H_{u',v'}}(\Z) = \d_H(\Z) - 2 \leq f_H(\Z) - k|w(\Z)| - 2 \leq k-1$, that is splitting-off the pair $(su',sv')$ detroys the $k$-edge-connectivity in $V-t$ of $H-t.$

Hence, since $k \geq 2$, by a theorem of Mader \cite{MaderSplit}, $d_{H-t}(s)=3$. So, by $d_H(s)$ even and Claim \ref{neigbors_of_s3}, $d_H(s,t)$ is odd and smaller than $\frac{d_H(s)}{2}$ . That is $d_H(s,t)=1$ and $d_H(s)=4$.
Hence, by \eqref{plusineq}, the inner-set of each element of $\mathcal{F}$ contains exactly two neighbors of $s$ and $|\mathcal{F}|=3$.
So, for $\X \in \F$, $\X'=(\o{X_I}-s,\o{X_O}-s)$ is a non-trivial bi-set of $V$ and $X'_I$ contains exactly one neighbor of $s$, say $x$. We have $f_H(\X') = f_H(\X) - d_H(s,X_I) + d_H(s,V \setminus X_O) \leq 2k+1-2+1 = 2k$ thus $\X'$ is a critical pair blocking $(st,sx)$. So $(t,\F') = (t,\{\X': \X \in \F\})$ satisfies \eqref{eq1}, \eqref{eq5}, \eqref{eq2} and \eqref{eq4}. The obstacle at $s$ is obtained by applying Lemma \ref{uncrossing} on $(t,\F')$.
\end{proof}

The following lemma concerns the case when an obstacle occurs after an admissible splitting-off.

\begin{lemma} \label{pinching_up}
 Let $H=(V+s,E)$ be a $(2k,k)$-connected graph in $V$ with $d_H(s) \geq 6$ even, $(su,sv)$ an admissible pair in $H$ and $(t,\C)$ an obstacle at $s$ in $H_{u,v}$.
 \begin{itemize}
  \item[(a)] If $t \in \{u,v\}$ then $d_H(s,t) \geq 2$ and $(st,st)$ is admissible in $H$.
  \item[(b)] If $t \notin \{u,v\}$, either there exists a $t$-star obstacle at $s$ in $H$ or there exists no obstacle at $s$ in $H_{t,w}$ for some admissible pair $(st,sw)$ in $H$.
 \end{itemize}
\end{lemma}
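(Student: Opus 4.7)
The starting observation, used in both cases, is that for each $\Y \in \C$ (which is critical in $H_{u,v}$ with wall $\{t\}$), tracking how the edges $su$, $sv$, and $uv$ enter $\Y$ gives
\[ f_H(\Y) - f_{H_{u,v}}(\Y) = 2 \cdot \mathbb{1}(u,v \in Y_I), \]
so $\Y$ remains a critical pair of $H$ with wall $\{t\}$ unless both $u$ and $v$ lie in $Y_I$; by innerly-disjointness at most one $\Y \in \C$ is of the latter type.

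\textbf{Case (a).} Suppose $t = u$. The inequality $d_H(s,t) \geq 2$ is immediate, since $d_{H_{u,v}}(s,t) = d_H(s,t) - 1$ is odd by \eqref{eq1}. For the admissibility of $(st,st) = (su,su)$ in $H$, I would argue by contradiction: let $\X$ block $(su,su)$ in $H$, so $u \in X_I$ and $f_H(\X) \leq 2k+1$. The admissibility of $(su,sv)$ forces $v \notin X_I$ (else $f_{H_{u,v}}(\X) \leq 2k-1$), and then a direct edge count yields $f_{H_{u,v}}(\X) \leq 2k+1$, so $\X$ still blocks $(su,su)$ in $H_{u,v}$. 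Extend $\X$ to a maximal such blocker $\X^*$ in $H_{u,v}$; since $d_{H_{u,v}}(s)$ is even, inequality \eqref{neigbors_of_s} applied to $\X^*$ produces a neighbor $w$ of $s$ in $H_{u,v}$ with $w \notin X^*_I$, and $u \in X^*_I$ forces $w \neq t$. The obstacle then supplies some $\Z \in \C$ blocking $(su,sw)$ with wall $\{t\}$, and Lemma \ref{main_lemma} applied in $H_{u,v}$ to $\X^*$ and $\Z$ forces $w(\X^*) = \{t\} = \{u\}$, contradicting $u \in X^*_I$.

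\textbf{Case (b).} Here $t \notin \{u,v\}$, so $d_H(s,t) = d_{H_{u,v}}(s,t)$ is odd. Write $\C = \C_1 \cup \C_2$ with $\C_1 = \{\Y \in \C : u,v \in Y_I\}$; by the initial observation, each $\Y \in \C_2$ is a critical pair of $H$ with wall $\{t\}$, and $|\C_1| \leq 1$. If $N_H(s) \setminus \{t\} \subseteq V_I(\C_2)$, the pair $(t, \C_2)$ satisfies \eqref{eq1}, \eqref{eq5}, \eqref{eq2}, \eqref{eq4} in $H$, and Lemma \ref{uncrossing} produces a $t$-star obstacle at $s$ in $H$. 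Otherwise pick $w \in N_H(s) \setminus (\{t\} \cup V_I(\C_2))$; such a $w$ must lie either in the inner-set of the (unique) element of $\C_1$, or in $\{u,v\}$ whose only $s$-edge was consumed by the split (since $N_{H_{u,v}}(s) \setminus \{t\} \subseteq V_I(\C)$). I would then verify that $(st,sw)$ is admissible in $H$ by the case-(a) method — a maximal blocker $\W^*$ of $(st,sw)$ would, via Lemma \ref{main_lemma} together with an element of $\C_2$ blocking $(st,sz)$ for some neighbor $z \notin W^*_I$ (such a $z$ is guaranteed by \eqref{neigbors_of_s} because $d_H(s) \geq 6$), be forced to have wall $\{t\}$, contradicting $t \in W^*_I$ — and that $H_{t,w}$ has no obstacle at $s$, since any hypothetical obstacle $(t',\C')$ there would, merged with $\C_2$ via Claim \ref{tight_intersection} and Lemma \ref{uncrossing}, yield a $t$-star obstacle in $H$, contrary to the standing hypothesis.

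\textbf{Main difficulty.} Case (b) is substantially harder than case (a): the correct choice of $w$ (in $V_I(\C_1)$ versus in $\{u,v\}$) affects the shape of the subsequent arguments, and one must simultaneously rule out both a blocker of $(st,sw)$ in $H$ and a hypothetical obstacle at $s$ in $H_{t,w}$. Each of these steps reduces to uncrossing against the fixed family $\C_2$, but the precise bookkeeping required to merge potential blockers with $\C_2$ back into a forbidden $t$-star obstacle of $H$ is the technical heart of the proof, and it is here that the parity bounds from \eqref{neigbors_of_s} together with the assumption $d_H(s) \geq 6$ provide the room needed to close the argument.
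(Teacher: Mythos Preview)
Your case (a) is correct and parallels the paper's argument; the only difference is that you pass to $H_{u,v}$ before invoking Lemma~\ref{main_lemma}, whereas the paper stays in $H$ by choosing $\Z\in\C$ with $v\notin Z_I$ so that $\Z$ is also a blocker in $H$.

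Case (b) has two genuine gaps. First, your admissibility argument for $(st,sw)$ only treats the case $t\in W^*_I$: a maximal blocker $\W^*$ of $(st,sw)$ may instead satisfy $w(\W^*)=\{t\}$ with $w\in W^*_I$, and then Lemma~\ref{main_lemma} yields nothing---your phrase ``contradicting $t\in W^*_I$'' does not apply. The paper does not assert admissibility for a fixed $w$; rather, it assumes \emph{no} pair $(st,\cdot)$ is admissible and iteratively adds such wall-$\{t\}$ blockers to $\F=\C_2$, and when a blocker does have $t$ in its inner set, ruling it out requires a further counting argument against the unique $\Y\in\C_1$ together with $d_H(s)\ge 6$, not merely a direct appeal to \eqref{neigbors_of_s}. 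Second, and more seriously, your plan to ``merge $\C'$ with $\C_2$'' to produce a $t$-star obstacle in $H$ presupposes $t'=t$. If $t'\neq t$ the walls do not match and neither Claim~\ref{tight_intersection} nor Lemma~\ref{uncrossing} applies. Proving $t'=t$ is the technical heart of part~(b): the paper devotes a separate claim to it, uncrossing elements of $\C$ against elements of $\C'$ via \eqref{pairs} to force the two centers to coincide. Even once $t'=t$ is known, the merge does not close directly (the member of $\C'$ containing $w$ is no longer critical in $H$); the paper instead observes that $d_H(s,t)$ odd forces $w=t$, whence $(t,\C')$ is already an obstacle in $H$ itself. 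Both of these ideas are missing from your sketch.
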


\begin{proof} (a) Suppose $t = u$. By (\ref{eq1}) in $H_{u,v}$, $d_H(s,t) = d_{H_{u,v}}(s,t) + 1 \geq 2$.
 
Suppose now that $(st,st)$ is non-admissible in $H$. Then, by Lemma \ref{blocking_bi-set}, there exists in $H$ a maximal blocking bi-set $\X$ for this pair. 
If $v$ belongs to the inner-set of an element of $\C$ denote by $\Y$ this element and let $\Y=(\emptyset,\emptyset)$ otherwise. Since $u \in X_I$, $\X$ is a blocking bi-set, $\Y$ is a critical pair or empty, by \eqref{neigbors_of_s} and $d_H(s)$ even, we have,
$d_{H_{u,v}}(s,X_I \cup Y_I) \leq d_{H_{u,v}}(s,X_I) + d_{H_{u,v}}(s,Y_I)
\leq (d_H(s,X_I)-1) + d_{H_{u,v}}(s,Y_I)
\leq (\frac{1}{2}d_H(s) -1) + (\frac{1}{2}d_{H_{u,v}}(s) -1)
= d_{H_{u,v}}(s) -1.$
So, by \eqref{eq4}, there exists $\Z \in \C \setminus \Y$ and $w \in N_{H_{u,v}}(s) \setminus (X_I \cup \{v\})$, such that $w \in Z_I$ and $v  \notin Z_I$.
Hence, $\Z$ is also a blocking bi-set for $(st,sw)$ in $H$. Then, by Lemma \ref{main_lemma} applied in $H$, $w(\X)=w(\Z)=\{t\}$ which contradicts the fact that $\X$ blocks the splitting $(st,st)$  in $H$.\\

\noindent (b)
\begin{claim} \label{t_star_in_G}
If $st$ belongs to no admissible pair in $H$ then there exists a $t$-star obstacle in $H$. 
\end{claim}
\begin{proof}
By $t \notin \{u,v\}$ and (\ref{eq1}), $d_{H}(s,t) = d_{H_{u,v}}(s,t)$ is odd thus it remains to construct a collection of critical pairs satisfying (\ref{eq2}), (\ref{eq3}) and (\ref{eq4}). By Lemma \ref{uncrossing}, it suffices to find one satisfying (\ref{eq2}) and (\ref{eq4}).

We initialize $\F$ as $\{\X \in \C, |X_I \cap \{u,v\}| < 2\}$. $\F$ is a collection of critical pairs satisfying (\ref{eq2}). Suppose $\F$ does not satisfy (\ref{eq4}), that is, there exists $w \in N_H(s) \setminus (V_I(\F) \cup \{t\})$. Since $st$ belongs to no admissible pair,  by Lemma \ref{blocking_bi-set}, there exists a maximal blocking bi-set $\X$ for the pair $(st,sw)$. We prove that $w(\X)=\{t\}$ that is $\X$ can be added to the collection $\F$ constructed so far.

Assume, by contradiction, that $t \in X_I$. We have $N_H(s) \cap V_I(\F) \subseteq X_I$ otherwise, there exists $\Z \in \F$ such that $(N_H(s) \cap Z_I) \setminus X_I \neq \emptyset$, thus by Lemma \ref{main_lemma}, $w(\X)=w(\Z)=\{t\}$, a contradiction.
Thus, by $t \in X_I$, \eqref{neigbors_of_s}, $d_H(s)$ even and $d_H(s) \geq 6$, we have, 
$d_{H_{u,v}}(s) - d_{H_{u,v}}(s,V_I(\F) \cup \{t\})
\geq d_{H_{u,v}}(s) - d_{H_{u,v}}(s,X_I) 
\geq d_H(s) - 2 - d_H(s,X_I)
\geq \frac{1}{2}d_H(s) - 2
\geq 1.$
Hence, by (\ref{eq4}) in $H_{u,v}$, there exists a unique $\Y \in \C \setminus \F$ such that 
$N_{H_{u,v}}(s)\setminus\{t\} \subseteq V_I(\F) \cup Y_I$. 
Since $\{u,v\} \subseteq Y_I$, we have $N_H(s)\setminus \{t\} \subseteq V_I(\F) \cup Y_I$ and, in particular, $w \in Y_I$.
If $\X$ is dangerous, $w \in X_I \cap Y_I$ and, by \eqref{neigbors_of_s}, we have $d_H(s,X_I) - d_H(s,X_I \cap Y_I) \leq \frac{1}{2} d_H(s)-1.$ If $\X$ is critical, by \eqref{neigbors_of_s}, we have $d_H(s,X_I) \leq \frac{1}{2} d_H(s)-1.$ 
Hence, by $N_H(s) \subseteq X_I \cup Y_I$, and by \eqref{neigbors_of_s}, we have the contradiction,
$d_H(s) = d_H(s,Y_I) + d_H(s,X_I) - d_H(s,X_I \cap Y_I)
  = d_{H_{u,v}}(s,Y_I)+ 2 + d_H(s,X_I) - d_H(s,X_I \cap Y_I)
  \leq (\frac{1}{2}(d_H(s)-2)-1 + 2) + (\frac{1}{2}d_H(s) - 1) 
  = d_H(s) -1.$
\end{proof}
\begin{claim} \label{t_equal_t}
If $(st,sw)$ is an admissible pair in $H$ and there exists an obstacle $(t',\C')$ in $H_{t,w}$ then $t=t'$.
\end{claim}
\begin{proof}
 Suppose $t \neq t'$.
\begin{prop} \label{t_or_t}
Let $\X \in \C$ and $\X' \in \C'$ such that $\X \sqcap \X'$ is a non-trivial bi-set of $V$. Then $t \in X'_I$ or $t' \in X_I$.
\end{prop}
\begin{proof}
By contradiction assume that $t \notin X'_I$ and $t' \notin X_I$.
Thus, by $t \neq t'$ and $\X'$ critical in $H_{t,w}$, we have $t \notin X'_O$ and $f_H(\X')=f_{H_{t,w}}(\X')=2k$. 
Since $\X$ is critical in $H_{u,v}$ and by \eqref{blocking_ineq}, we have, $f_H(\X) - \d_H(\X \sqcup \X') \leq f_H(\X) - d_H(s,X_I) \leq f_{H_{u,v}}(\X) - d_{H_{u,v}}(s,X_I) \leq 2k-1$. 
Hence, since $|w(\X \sqcup \X')| = |\{t,t'\}| =2$ and $\X \sqcap \X'$ is non-trivial, by \eqref{pairs}, $0 \leq (f_H(\X)-2k) + (f_H(\X')-2k) - \d_H(\X \sqcup \X') \leq -1,$ a contradiction.
\end{proof}

\begin{prop} \label{t'_in_X}
 There exists $\X \in \C$ such that $t' \in X_I$.
\end{prop}
\begin{proof}
Suppose for a contradiction that $t' \notin V_I(\C)$. Then, by \eqref{eq1} for $(t',\C')$ in $H_{t,w}$ and  \eqref{eq4} for $(t,\C)$ in $H_{u,v}$ and $t \neq t'$, we have $t' \in \{u,v\}$,  say $t'=u$. 
By \eqref{C_geq_3} and \eqref{eq3} for $(t',\C')$ in $H_{t,w}$, there exists an element $\X' \in \C'$ containing neither $t$ nor $v$. Hence, there exists a vertex in $(N_H(s) \cap X_I) \setminus \{t',u,v,t\}$ which, by \eqref{eq4} for $(t,\C)$ in $H_{u,v}$, is contained in the inner-set of an element $\X \in \C$. Thus, $\X' \sqcap \X$ is a non-trivial bi-set of $V$, $t \notin X'_I$ and $t' \notin X_I$, a contradiction to Proposition \ref{t_or_t}.
\end{proof}

By Proposition \ref{t'_in_X}, there exists $\X \in \C$ such that $t' \in X_I$. 
By \eqref{C_geq_3} and \eqref{eq3} for $(t,\C)$ in $H_{u,v}$, there exists an element $\Y \in \C \setminus \{\X\}$ not containing $w.$ 
Hence, there exists a vertex in $(N_H(s) \cap Y_I) \setminus \{t',w,t\}$ which, by \eqref{eq4} in $H_{t,w}$, is contained in the inner-set of an element $\X' \in \C'$. Thus $\Y \sqcap \X'$ is non-trivial, so by Proposition \ref{t_or_t} and $t' \notin Y_I$, we have $t \in X'_I.$

Suppose that there exists a neighbor $z$ of $s$ in $H'=H-\{su,sv,sw\}$ that doesn't belong to $X_I$ nor $X'_I$. 
Then, by \eqref{C_geq_3}, \eqref{eq3}, $t' \in X_I$ and $t \in X'_I,$ there exists $\Z \in \C \setminus \X$ and $\Z' \in \C' \setminus \X'$ such that $z \in Z_I \cap Z'_I$. By $t' \in X_I$, $t \in X'_I$ and \eqref{eq3}, this contradicts Proposition \ref{t_or_t} for $\Z$ and $\Z'$.
Hence, by \eqref{neigbors_of_s}, we have the following contradiction $d_H(s)-3 = d_{H'}(s) \leq d_{H'}(s,X_I) + d_{H'}(s,Y_I) \leq d_{H_{u,v}}(s,X_I) + d_{H_{t,w}}(s,Y_I) \leq (\frac{d_{H_{u,v}}(s)}{2}-1) + (\frac{d_{H_{t,w}}(s)}{2}-1) = d_H(s)-4$.
\end{proof}
Suppose there exists no $t$-star obstacle at $s$ in $H$. Hence, by Claim \ref{t_star_in_G}, there exists an admissible pair $(st,sw)$ in $H$. By Claim \ref{t_equal_t}, if there exists an obstacle in $H_{t,w}$, then it is a $t$-star obstacle $(t,\C')$.
By $t \notin \{u,v\}$ and \eqref{eq1} in $H_{u,v}$, $d_H(s,t)$ is odd. Hence, by \eqref{eq1} in $H_{t,w}$, $w=t$. Thus $(t,\C')$ is a $t$-star obstacle in $H$, a contradiction.
\end{proof}



Now we are in the position to prove our main result that characterizes the existence of a complete admissible splitting-off. 

\begin{theo} \label{splitting_th}
 Let $H=(V+s,E)$ be a $(2k,k)$-connected graph in $V$ with $k \geq 2$ such that $d_H(s) \geq 4$ is even. There is a complete admissible splitting-off at $s$ if and only if there exists no obstacle at $s$.
\end{theo}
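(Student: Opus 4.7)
For necessity, suppose $(t,\C)$ is an obstacle at $s$. By the remark after the definition of obstacle, no pair $(st,su)$ with $u\in N_H(s)\setminus\{t\}$ is admissible, and since $d_H(s,t)$ is odd by~\eqref{eq1}, any complete splitting-off at $s$ must split at least one such pair. Hence no complete admissible splitting-off exists.

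For sufficiency, I would proceed by induction on $d_H(s)$: extract an admissible pair $(su,sv)$ via Theorem~\ref{firstobstacle}, then use Lemma~\ref{pinching_up} to replace it whenever the splitting creates a new obstacle. The base case $d_H(s)=4$ is immediate: Theorem~\ref{firstobstacle} provides an admissible pair $(su,sv)$, and applied again to $H_{u,v}$ (where $d(s)=2\neq 4$) gives the remaining admissible pair.

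For the inductive step $d_H(s)\geq 6$, Theorem~\ref{firstobstacle} yields an admissible pair $(su,sv)$ in $H$. If $H_{u,v}$ has no obstacle, induction on $H_{u,v}$ finishes; otherwise let $(t,\C)$ be an obstacle in $H_{u,v}$. In case (b) of Lemma~\ref{pinching_up}, when $t\notin\{u,v\}$, the absence of a $t$-star obstacle in $H$ directly yields an admissible pair $(st,sw)$ with $H_{t,w}$ obstacle-free, and induction concludes. In case (a), say $t=u$, we split the admissible pair $(su,su)$ given by Lemma~\ref{pinching_up}(a) instead; if $H_{u,u}$ has no obstacle, induction concludes, and if it has an obstacle $(t',\C')$, a parity observation forces $t'\neq u$, so Lemma~\ref{pinching_up}(b) applied to the pair $(su,su)$ produces (again by absence of a $t'$-star obstacle in $H$) an admissible pair $(st',sw')$ with $H_{t',w'}$ obstacle-free.

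The main subtlety is this parity observation. First, $v\neq u$ in the original pair, for otherwise the critical pairs in $\C$ (all with wall $\{u\}$) would remain critical in $H$ since edges $su$ do not enter any bi-set having $u$ in its wall, and $N_H(s)\setminus\{u\}=N_{H_{u,u}}(s)\setminus\{u\}$; thus $(u,\C)$ would already be an obstacle in $H$, contradicting the hypothesis. With $v\neq u$, $d_H(s,u)=d_{H_{u,v}}(s,u)+1$ is even because $d_{H_{u,v}}(s,u)$ is odd by~\eqref{eq1}, so $d_{H_{u,u}}(s,u)=d_H(s,u)-2$ is even while $d_{H_{u,u}}(s,t')$ is odd; hence $t'\neq u$. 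This check is precisely what ensures that a single re-splitting step suffices to rescue case (a), and it is the hardest point of the argument.
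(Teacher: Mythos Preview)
Your proof is correct and follows the same inductive strategy as the paper: apply Theorem~\ref{firstobstacle} to find an admissible pair, then use Lemma~\ref{pinching_up} to repair the choice whenever an obstacle appears after the split. The only difference is in how case~(a) is closed on the second application of Lemma~\ref{pinching_up}: you use a parity count (after separately ruling out $u=v$ via obstacle-lifting) to show $t'\neq t$ directly, whereas the paper argues that if $t'=t$ then the obstacle $(t,\C')$ in $H_{t,t}$ lifts verbatim to an obstacle in $H$; these are the same obstacle-lifting observation deployed at slightly different points.
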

\begin{proof} Suppose there exists an obstacle $(t,\C)$ at $s$. By (\ref{eq1}), every sequence of $\frac{1}{2}d_H(s)$ splitting-off of disjoint pairs at $s$ contains a pair $(st,su)$ with $u \in N_H(s) \setminus \{t\}$. As we noticed after the definition of an obstacle, such a pair is not admissible. Hence there exists no admissible complete splitting-off at $s$.
\medskip

 Now, we prove, by induction on $d_H(s)$, that if there exists no obstacle at $s$, then there exists an admissible complete splitting-off at $s$. 
 Suppose $d_H(s)=4$ and there exists no obstacle at $s$. By Theorem \ref{firstobstacle}, there exists an admissible splitting-off $(su,sv)$ at $s$. Since the only possible splitting-off in $H_{u,v}$ is admissible, there exists an admissible complete  splitting-off at $s$ in $H$.
 
 Now suppose that the theorem is true for $d_{H'}(s)=2\ell$ and $\ell \geq 2$. Let $H=(V+s,E)$ be a $(2k,k)$-connected graph in $V$ such that $d_H(s)=2\ell + 2 \geq 6$ and there exists no obstacle at $s$. 
 By Theorem \ref{firstobstacle}, there exists an admissible splitting-off $(su,sv)$ at $s$.
 If there exists no obstacle at $s$ in $H_{u,v}$, then, by induction, there exists an admissible complete splitting-off at $s$ and we are done. So we may assume that there exists a $t$-star obstacle at $s$ in $H_{u,v}$.
 Since there exists no obstacle at $s$ in $H$, if case (b) of Lemma \ref{pinching_up} occurs then there exists some admissible pair $(st,sw)$ in $H$ such that there exists no obstacle at $s$ in $H_{t,w}$. Thus, by induction, there exists a complete splitting at $s$ in $H$ and we are done. So we may assume that case (a) of Lemma \ref{pinching_up} occurs and we consider $H_{t,t}$ that is $(2k,k)$-connected in $V$. If there exists an obstacle $(t',\C')$ at $s$ in $H_{t,t}$, for the same reason as above, case (a) of Lemma \ref{pinching_up} occurs. Hence $t=t'$ and $(t,\C')$ is an obstacle in $H$, a contradiction.
\end{proof}

\section{Construction of $(2k,k)$-connected graphs}

In this section we provide a construction of the family of $(2k,k)$-connected graphs for $k$ even. The special case $k=2$ has been previously proved by Jord\'an \cite{Jordan_2006}.
\medskip

We need the following extension of Lemma 5.1 of \cite{Jordan_2006} for $k$ even.
Let $G=(V,E)$ be a $(2k,k)$-connected graph, $s$ a vertex of degree even, $(t,\C)$ and $(t,\C')$ two obstacles at $s$. We say that $(t,\C)$ is a \emph{refinement} of $(t,\C')$ if there exists $\X' \in \C'$ such that $\X \sqsubseteq \X'$ for all $\X \in \C$.  An obstacle that has no proper refinement is called \emph{finest}. 

\begin{lemma} \label{finest_obstacle}
 Let $G=(V,E)$ be a $(2k,k)$-connected graph with $k$ even. Let $s$ be a vertex of degree $2k$ and $(t,\C)$ a finest obstacle at $s$. Let $\X \in \C$, $s'$ a vertex  in $X_I$ of degree $2k$ and $(t',\C')$ an obstacle at $s'$. Then there exists $\X' \in \C'$ such that ${X'}_I \subseteq X_I$.
\end{lemma}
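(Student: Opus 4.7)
The plan is by contradiction: assume that for every $\X' \in \C'$ we have $X'_I \not\subseteq X_I$, and build a proper refinement of $(t,\C)$ at $s$, contradicting the finest hypothesis. I will view $G$ with either $s$ or $s'$ as the distinguished vertex; then each $\X \in \C$ is a tight critical pair of $V\setminus\{s\}$ with wall $\{t\}$ (so $f_G(\X)=2k$ and $\d_G(\X)=k$), and each $\X' \in \C'$ is a tight critical pair of $V\setminus\{s'\}$ with wall $\{t'\}$. Since $t \in w(\X)$ and $s' \in X_I$, we have $s' \neq t$; symmetrically, $t' \neq s$.

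For each $\X' \in \C'$ I examine the two candidate bi-sets $\X \sqcap \X'$ and $\X \sqcap \o{\X'}$. Whenever such a candidate is a non-trivial bi-set of $V$, tightness of $\X$ and $\X'$ together with Claim \ref{tight_intersection} makes it tight as well, so its $f$-value equals $2k$. A direct computation gives $w(\X \sqcap \X') = (X_I \cap \{t'\}) \cup (\{t\} \cap X'_O)$, and the analogous formula for $\X \sqcap \o{\X'}$ has $\o{X'_I}$ in place of $X'_O$ and $\o{X'_O}$ in place of $X'_I$; so exactly one of these two candidates has wall $\{t\}$, producing a tight critical pair $\Y(\X') \sqsubseteq \X$ with wall $\{t\}$. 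The hypothesis $X'_I \not\subseteq X_I$ ensures that $\Y(\X')$ is a strict sub-bi-set of $\X$.

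Next I apply the uncrossing procedure of Lemma \ref{uncrossing} to the collection $\{\Y(\X') : \X' \in \C'\}$ to obtain an innerly-disjoint family $\mathcal{B}$ of critical pairs with wall $\{t\}$, each $\sqsubseteq \X$ and with $Y_I \subsetneq X_I$. Setting $\C^* := (\C \setminus \{\X\}) \cup \mathcal{B}$, the obstacle axioms \eqref{eq1}--\eqref{eq3} for $(t,\C^*)$ at $s$ are inherited from $\C$ and $\mathcal{B}$, and $\C^*$ strictly refines $\C$ by the $\sqsubseteq \X$ containment plus the inner-set strictness, which contradicts the finest hypothesis once \eqref{eq4} is confirmed.

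The main obstacle is verifying the covering condition \eqref{eq4} for $\C^*$, namely that every neighbor of $s$ lying in $X_I$ is contained in $V_I(\mathcal{B})$. This boils down to comparing $N_G(s) \cap X_I$ and $N_G(s') \setminus \{t'\} \subseteq V_I(\C')$ across the cut defined by $\X$, using the bounds \eqref{plusineq}--\eqref{neigbors_of_s} at the two special vertices $s$ and $s'$. Here the hypothesis that $k$ is even enters in the same spirit as in Proposition \ref{non_trivial} and Lemma \ref{main_lemma}: the parities of $d_G(s,t)$ and $d_G(s',t')$ (both odd by \eqref{eq1}) combined with $d_G(s)=d_G(s')=2k$ force strict inequalities that rule out borderline cases and supply the missing neighbor needed to cover $N_G(s)\cap X_I$ by $\mathcal{B}$. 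Once \eqref{eq4} is established, $(t,\C^*)$ is a proper refinement of $(t,\C)$ and the lemma follows.
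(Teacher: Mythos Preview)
Your sketch has two genuine gaps.

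First, your wall computation does not yield what you claim. You correctly write $w(\X \sqcap \X') = (X_I \cap \{t'\}) \cup (\{t\} \cap X'_O)$, and similarly for $\X \sqcap \o{\X'}$. But if $t' \in X_I$, then \emph{both} candidates have $t'$ in their wall (and $t \neq t'$ since $t \notin X_I$), so neither has wall $\{t\}$ and your construction of $\Y(\X')$ breaks down. The paper's proof deals with this case separately: it shows directly, via the inequality \eqref{pairs} applied to $\o{\X}$ and a suitable $\Y' \in \C'$, that $t' \in X_I$ leads to a contradiction. It then shows, again via \eqref{pairs}, that $t' \neq t$ is impossible. Only once $t'=t \notin X_I$ is established does the intersection $\X \sqcap \Y'$ automatically have wall $\{t\}$, and then $(\X \sqcap \Y')_I$ and $(\X \sqcap \o{\Y'})_I$ partition $X_I$ exactly (since $w(\Y')=\{t\}$ misses $X_I$), so \eqref{eq4} is immediate. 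You have skipped this case analysis entirely.

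Second, your verification of \eqref{eq4} is not a proof. You assert that ``parities of $d_G(s,t)$ and $d_G(s',t')$ \ldots\ force strict inequalities'' without saying which inequalities or how they give $N_G(s)\cap X_I \subseteq V_I(\mathcal{B})$. In the paper, the evenness of $k$ is used at one precise point: since $d_G(s',t)$ is odd by \eqref{eq1} and $d_G(s',t)\le k$ by $(2k,k)$-connectivity, evenness of $k$ gives $d_G(s',t)<k$, hence $d_G(s')-d_G(s',t)>k=\d_G(\X)$, which guarantees a member $\Y'\in\C'$ with $Y'_I\cap X_I\neq\emptyset$. A single such $\Y'$ suffices (no need to run over all of $\C'$), because after establishing $t=t'$ the two pieces $\X\sqcap\Y'$ and $\X\sqcap\o{\Y'}$ already cover all of $X_I$. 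Your plan to take one $\Y(\X')$ per $\X'\in\C'$ and then uncross is more elaborate than necessary and, as written, does not close the covering gap.
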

\begin{proof} Suppose that the lemma is false.

Suppose $t' \in X_I$. By assumption there exists $\Y' \in \C'$ such that $Y'_I \setminus X_I \neq \emptyset$. Suppose that $t \notin Y'_I$, then $\o{\X} \sqcap \Y'$ is non-trivial and $|w(\o{\X} \sqcup \Y')|=|\{t,t'\}|=2$. Hence, by \eqref{pairs} and since $\o{\X}$ and $\Y'$ are critical, we have $0+0 \geq \d_G(\o{\X} \sqcup \Y') \geq d_G(s',Y'_I) \geq 1$, a contradiction. Hence, $t \in Y'_I$. Now suppose that $X_O \cup Y'_O \neq V$ that is $\o{\X} \sqcap \o{\Y'}$ is non-trivial. We have $w(\o{\X} \sqcup \o{\Y'}) = |\{t,t'\}|=2$, then, by \eqref{pairs} and since $\o{\X}$ and $\o{\Y'}$ are critical, we have $0+0 \geq d_G(\o{X_O} \cap Y'_O, \o{X_I} \cap Y'_I) + \d_G(\o{\X} \sqcup \o{\Y'}) \geq d_G(s',Y'_I) \geq 1$, a contradiction. Hence, $Y'_O \cup X_O = V$, and, for all $\X' \in \C'-\Y'$, $X'_I \subseteq X_I$, a contradiction. Hence $t' \notin X_I$.

Suppose $t' \neq t$. If $t$ belongs to an element $\Z' \in \C'$, then by \eqref{neigbors_of_s}, $d_G(s')-d_G(s',Z'_I) > 2k - k = \d_G(\X)$. Hence there exists $\Y' \in \C'$ with $Y'_I \cap X_I \neq \emptyset$ and $t \notin \Y'$. Thus $\X \sqcap \Y'$ is non-trivial and $|w(\X \sqcup \Y')|=|\{t,t'\}|=2$. Since $\X$ and $\Y'$ are both critical, by \eqref{pairs} and \eqref{eq1}, $0 + 0 \geq d_G(\o{X_O} \cap Y'_O, X_I \cap \o{Y'_I}) \geq d_G(t',s') \geq 1$, a contradiction. Hence $t=t'.$

By $(2k,k)$-connectivity of $G$, $d_G(s',t) \leq k$. Thus, by \eqref{eq1} and $k$ even, $d_G(s',t) < k$. Hence $d_G(s') - d_G(s',t) > 2k-k = \d_G(\X)$ and there exists $\Y' \in \C'$ with $Y'_I \cap X_I \neq \emptyset$. By $|\C'|\geq3$ and assumption, $\X \sqcup \Y', \o{\X} \sqcap \Y'$  and $\X \sqcap \o{\Y'}=\o{\o{\X} \sqcup \Y'}$ are non-trivial, thus, by Claim \ref{tight_intersection}, $\X \sqcap \Y'$ and $\X \sqcap \o{\Y'}$ are  tight bi-sets with wall $t$. 
Thus, in $\C$, $\X$ can be replaced by the bi-sets among $\X \sqcap \Y'$ and $\X \sqcap \o{\Y'}$ which contain at least one neighbor of $s$ in their inner-set. Hence, $(t,\C)$ is not a finest obstacle at $s$, a contradiction.
\end{proof}

We can now decribe and prove the construction of the family of $(2k,k)$-connected graphs.
We denote by $kK_3$ the graph on $3$ vertices where each pair of vertices is connected by $k$ parallel edges. Note that $kK_3$ is $(2k,k)$-connected and it is the only minimally $(2k,k)$-connected graph on $3$ vertices.

\begin{theo} \label{construction_th}
 A graph $G$ is $(2k,k)$-connected with $k$ even if and only if $G$ can be obtained from $kK_3$ by a sequence of the following operations:
\begin{itemize}
 \item[(a)] adding a new edge,
 \item[(b)] pinching a set $F$ of $k$ edges such that, for all vertices $v$, $d_F(v) \leq k$.
\end{itemize}
\end{theo}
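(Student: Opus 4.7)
The plan is to prove both directions of the equivalence separately, with necessity being the substantial part. For sufficiency I would verify, starting from the $(2k,k)$-connected graph $kK_3$, that both operations preserve $(2k,k)$-connectivity. Adding an edge is immediate. For pinching a set $F$ of $k$ edges with $d_F(v)\le k$ for every vertex $v$: the new vertex $s$ has degree $2k$; the complete splitting-off at $s$ in the pinched graph (using the very pairs that were pinched) recovers the original graph, so $2k$-edge-connectivity lifts from below; and for each original vertex $v$, the bound $d_F(v)\le k$ guarantees that after removing $v$ from the pinched graph $s$ is still joined to the rest by at least $k$ edges, so $k$-edge-connectivity after removing $v$ follows from that of $G-v$ by a routine cut analysis.

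For necessity I would induct on $|V(G)|+|E(G)|$. In the base case $|V(G)|=3$, the $(2k,k)$-connectivity forces at least $k$ parallel edges between each pair of vertices (consider $G-v$ for each $v$), so $G$ contains $kK_3$ as a spanning subgraph and the excess edges are peeled off by the inverse of (a). In the inductive step with $|V(G)|\ge 4$: if $G$ is not minimally $(2k,k)$-connected, strip a removable edge, apply induction, and reinsert it via (a). The substantive case is when $G$ is minimally $(2k,k)$-connected; here the goal is to exhibit a vertex $s$ of degree $2k$ at which there is no obstacle. Theorem~\ref{splitting_th} then provides an admissible complete splitting-off at $s$, whose result $G'$ is $(2k,k)$-connected on one fewer vertex. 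The $k$ new edges form a set $F\subseteq E(G')$ that is the pinching witness, and the side-condition $d_F(v)\le k$ is automatic: indeed $d_F(v)=d_G(s,v)$, and if $d_G(s,v)>k$ then removing $v$ from $G$ would leave $s$ with degree less than $k$, contradicting the $k$-edge-connectivity of $G-v$. Induction on $G'$ then yields $G$ via operation (b).

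To find such $s$, I would argue by contradiction: suppose every vertex of degree $2k$ has an obstacle at it. Among all triples $(s,(t,\C),\X)$ with $s$ of degree $2k$, $(t,\C)$ a finest obstacle at $s$, and $\X\in\C$, choose one that minimizes $|X_I|$. Using a Mader-type refinement for minimally $(2k,k)$-connected graphs to the effect that every critical pair contains, in its inner-set, a vertex of degree $2k$, select $s'\in X_I$ of degree $2k$. By hypothesis $s'$ admits a finest obstacle $(t',\C')$, and Lemma~\ref{finest_obstacle} supplies $\X'\in\C'$ with $X'_I\subseteq X_I$. Since $s'\in X_I$ but $s'\notin X'_I$ (the bi-sets in an obstacle at $s'$ live in $V(G)\setminus\{s'\}$), we get $X'_I\subsetneq X_I$, contradicting minimality.

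The hard part will be the Mader-type claim that in a minimally $(2k,k)$-connected graph every tight critical pair $\X$ contains in $X_I$ a vertex of degree $2k$. This extends Mader's classical theorem on minimally $2k$-edge-connected graphs to the mixed setting; I expect it can be proved by an uncrossing argument on tight bi-sets via the submodularity of $f_G$ combined with the observation that, in a minimally $(2k,k)$-connected graph, every edge is witnessed by some tight non-trivial bi-set.
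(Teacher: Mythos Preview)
Your plan is correct and follows the paper's proof essentially line for line: sufficiency by checking that (a) and (b) preserve $(2k,k)$-connectivity; necessity by induction, reducing to the minimally $(2k,k)$-connected case, choosing a triple $(s,(t,\C),\X)$ with $\X$ minimal among all degree-$2k$ vertices $s$ and finest obstacles $(t,\C)$, and then using Lemma~\ref{finest_obstacle} on a degree-$2k$ vertex $s'\in X_I$ to produce a strictly smaller $\X'$.

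Two points of comparison. First, what you flag as ``the hard part'' --- that in a minimally $(2k,k)$-connected graph the inner set $X_I$ of such a critical pair contains a vertex of degree $2k$ --- is not something you need to prove anew: the paper invokes Lemma~7 of Kaneko--Ota~\cite{Kaneko_Ota2000} for precisely this (and for the existence of \emph{some} degree-$2k$ vertex to start the argument). So no additional uncrossing work is required. Second, your sufficiency sketch is a bit rough: you do not mention that $G'-s$ must be $k$-edge-connected (easy, since $d_{G-F}(X)\ge d_G(X)-|F|\ge 2k-k$), and the ``lifts from below'' phrasing for $2k$-edge-connectivity needs a line of justification. The paper avoids these separate cut arguments by verifying $f_{G'}(\X)\ge 2k$ directly in the bi-set formulation, with a short case split on whether $s$ lies in $X_I$, in $w(\X)$, or outside $X_O$; this is cleaner and treats all cases at once.
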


\begin{proof} First we prove the sufficiency, that is these operations preserve $(2k,k)$-connectivity. It is clearly true for (a). Let $G'$ be a graph obtained from a $(2k,k)$-connected graph $G=(V,E)$ by the operation (b) and call $s$ the new vertex. We must show that for every non-trivial bi-set $\X$ of $V+s$, we have $f_{G'}(\X) \geq 2k$.
If $\X$ is a non-trivial bi-set of $V$ then $s \notin X_O$ and, by $(2k,k)$-connectivity of $G$,
$f_{G'}(\X) = \d_{G'}(\X) + k|w(\X)|
\geq \d_G(\X) + k|w(\X)|
= f_G(\X) \geq 2k.$
So, by symmetry of $f_{G'}$, we may assume that $X_I = \{s\}$ or $w(\X)=\{s\}.$
If $X_I = \{s\}$, then, by $d_{G'}(s)=2k$ and $d_F(w(\X))\leq k$, we have 
$f_{G'}(\X) = \d_{G'}(\X) + k|w(\X)|
= d_{G'}(s) -  d_{G'}(s,w(\X)) + k|w(\X)|
= d_{G'}(s) -  d_F(w(\X)) + k|w(\X)|
\geq 2k.$
If $w(\X)=\{s\}$ then $\emptyset \neq X_I \neq V$. Hence, by $(2k,k)$-connectivity of $G$ and $|F| = k$, we have
$f_{G'}(\X) = \d_{G'}(\X) + k|w(\X)|
= d_G(X_I) - d_F(X_I) + k
\geq d_G(X_I) - |F| + k
\geq 2k.$
%
\medskip

To see the necessity, let $G$ be a $(2k,k)$-connected graph with at least $4$ vertices. Note that the inverse operation of (a) is deleting an edge and that of (b) is a complete splitting-off at a vertex $s$ of degree $2k$ such that $d_G(s,v) \leq k$ for all $v \in V$. Note also that these inverse operations must preserve $(2k,k)$-connectivity. Thus we may assume that, on the one hand, $G$ is minimally $(2k,k)$-connected and hence, by Lemma 7 of \cite{Kaneko_Ota2000}, $G$ contains a vertex of degree $2k$, and, on the other hand, for every such vertex $u$, there exists no admissible complete splitting-off at $u$, that is, by Theorem \ref{splitting_th}, there exists an obstacle at $u$.

 We choose in $\{(u,(t,\C),\X):$ $d_G(u)=2k$, $(t,\C)$ a finest obstacle at $u$, $\X \in \C\}$  a triple $(u^*,(t^*,\C^*),\X^*)$ with $\X^*$ minimal for inclusion.
By Lemma 7 of \cite{Kaneko_Ota2000}, there exists a vertex $u'$ of degree $2k$ in $X^*_I$. Then, as we have seen, there exits a finest obstacle $(t',\C')$ at $u'$. By Lemma \ref{finest_obstacle}, there exists $\X' \in \C'$ such that $X'_I \subseteq X^*_I$. Since $X'_I \cup u \subseteq  X^*_I$, the triple $(u',(t',\C'),\X')$ contradicts the choice of $(u^*,(t^*,\C^*),\X^*)$.
\end{proof}
 
We mention that the condition $k$ is even is necessary in Lemma \ref{finest_obstacle} and Theorem \ref{construction_th}. Consider the graph obtained from $K_4$ by adding a new vertex $t$ and $3$ edges between $t$ and each vertex of $K_4$. This graph is minimally $(6,3)$-connected but there exists no complete admissible splitting-off at any of the $4$ vertices of degree $6$. Indeed, if $s,a,b,c$ denote the vertices of degree $6$, then $\{(\{a,t\},\{a\}),(\{b,t\},\{b\}),(\{c,t\},\{c\})\}$, is a $t$-star obstacle at $s$.


\section{Augmentation theorem}

In this section, we answer the following question for $k \geq 2$: given a graph what is the minimum number of edges to be added to make it $(2k,k)$-connected. For $k=1$, that is for $2$-vertex-connectivity, this problem had been already solved by Eswaran and Tarjan \cite{Eswaran_Tarjan}.
\medskip

We shall need the following definitions. Let $G=(V,E)$ be a graph and $k$ an integer. An \emph{$s$-extension} of $G$ is a graph $H=(V+s,E \cup F)$ where $F$ is a set of edges between $V$ and the new vertex $s$. The \emph{size} of an $s$-extension of $G$ is defined by $|F|$.
\medskip

We mimic the approach of Frank \cite{frank_1992} for the augmentation problem: first we prove a result on minimal extensions and then, by applying our splitting-off theorem, we get a result on minimal augmentation.


\begin{lemma} \label{minimal_extension}
 Let $G=(V,E)$ be a graph and $k$ an integer. The minimal size of an $s$-extension of $G$ that is $(2k,k)$-connected in $V$ is equal to $\max \left \{ \sum_{\X \in \mathcal{X}}(2k - f_G(\X)) \right \},$ where $\mathcal{X}$ is a family of non-trivial pairwise innerly-disjoint bi-sets of $V$.
\end{lemma}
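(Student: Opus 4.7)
The plan is to prove the identity in two directions. The easy lower bound (``$\geq$'') proceeds as follows. Consider any $s$-extension $H=(V+s,E\cup F)$ that is $(2k,k)$-connected in $V$ and any family $\mathcal{X}$ of non-trivial pairwise innerly-disjoint bi-sets of $V$. For each $\X\in\mathcal{X}$, since $X_O\subseteq V$, every edge from $s$ to $X_I$ enters $\X$, so $\hat{d}_H(\X)=\hat{d}_G(\X)+d_H(s,X_I)$ and hence $d_H(s,X_I)\ge 2k-f_G(\X)$ by $(2k,k)$-connectivity of $H$ in $V$. Innerly-disjointness ensures that the edges counted on the right are pairwise distinct across $\mathcal{X}$, whence $|F|\ge\sum_{\X\in\mathcal{X}}(2k-f_G(\X))$ after restricting WLOG to bi-sets with $f_G(\X)<2k$.

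For the upper bound (``$\leq$'') I would first pick an $s$-extension $H=(V+s,E\cup F)$ of minimum size that is $(2k,k)$-connected in $V$; such an $H$ exists, for instance by joining $s$ to every $v\in V$ by $2k$ parallel edges. Minimality of $|F|$ implies that for each $sv\in F$ there is a tight bi-set of $V$ whose inner-set contains $v$, as otherwise $H-sv$ would remain $(2k,k)$-connected in $V$. To match the lower bound, it suffices to assemble these into a family $\mathcal{X}$ of pairwise innerly-disjoint tight bi-sets of $V$ with $N_H(s)\subseteq V_I(\mathcal{X})$: then every edge of $F$ enters exactly one member of $\mathcal{X}$, and $\sum_{\X\in\mathcal{X}}(2k-f_G(\X))=\sum_{\X\in\mathcal{X}}d_H(s,X_I)=|F|$.

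The construction of such a family proceeds by uncrossing. Choose $\mathcal{X}$ among innerly-disjoint families of tight bi-sets of $V$ so as first to maximise $|N_H(s)\cap V_I(\mathcal{X})|$ and, subject to that, to minimise $\sum_{\X\in\mathcal{X}}|X_I|$. Suppose some $v\in N_H(s)\setminus V_I(\mathcal{X})$ is left uncovered, and let $\Y$ be a tight bi-set of $V$ with $v\in Y_I$ and $|Y_I|$ minimum (which exists by minimality of $H$). Some $\X\in\mathcal{X}$ must satisfy $X_I\cap Y_I\neq\emptyset$, for otherwise $\mathcal{X}\cup\{\Y\}$ would contradict the primary maximum. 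Since $\overline{\X}$ is also tight by symmetry of $f_H$, Claim~\ref{tight_intersection} applied to $\Y$ and $\overline{\X}$ gives that $\Y\sqcap\overline{\X}$ is a tight bi-set of $V$ with inner-set $Y_I\setminus X_O\subsetneq Y_I$; in the generic subcase $v\notin w(\X)$ this still contains $v$ and contradicts the minimality of $|Y_I|$. The boundary subcase $v\in w(\X)$ is resolved by first refining $\X$ inside $\mathcal{X}$: replace $\X$ by the tight bi-sets $\X\sqcap\Y$ and $\X\sqcap\overline{\Y}$ supplied by Claim~\ref{tight_intersection}, which preserves the cover of $N_H(s)$ by $\mathcal{X}$ while strictly improving the secondary target or placing $v$ in an inner-set of the refined family.

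The main obstacle is precisely this uncrossing step. In each subcase one has to verify that the intersection in question is a non-trivial bi-set of $V$ (not merely of $V+s$), apply Claim~\ref{tight_intersection} to inherit tightness, and argue that the lexicographic minimality is violated. The wall case $v\in w(\X)$ is the most subtle because neither $\Y\sqcap\overline{\X}$ nor the refinements of $\X$ automatically put $v$ into an inner-set; eliminating it requires a carefully designed potential function combining $|X_I|$ and $|w(\X)|$ over $\mathcal{X}$. The overall scheme is the bi-set analogue of Frank's uncrossing in \cite{frank_1992}, carried out through the bi-set machinery of Section~3.
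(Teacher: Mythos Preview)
Your lower bound and your identification of the target for the upper bound (an innerly-disjoint family of tight bi-sets whose inner-sets cover $N_H(s)$) are both correct and match the paper. The gap is in the uncrossing construction.

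In your ``generic subcase'' you invoke Claim~\ref{tight_intersection} on $\Y$ and $\o\X$, but that claim requires \emph{both} $\Y\sqcap\o\X$ and $\o{\Y\sqcup\o\X}=\X\sqcap\o\Y$ to be non-trivial bi-sets of $V$; you check the first via $v\in Y_I\setminus X_O$ but never verify $X_I\setminus Y_O\neq\emptyset$, and nothing in your set-up excludes $X_I\subseteq Y_O$. More seriously, you yourself flag the subcase $v\in w(\X)$ as the ``main obstacle'' and resolve it only by appeal to an unspecified ``carefully designed potential function''. Neither your proposed refinement $\X\mapsto\{\X\sqcap\Y,\X\sqcap\o\Y\}$ nor the candidate $\Y\sqcap\o\X$ puts $v$ into an inner-set, and the refinement can moreover drop from $V_I(\mathcal{X})$ any neighbour of $s$ lying in $w(\Y)\cap X_I$, so it is not even clear that your lexicographic objective is monotone under it. As written, the argument is incomplete.

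The paper avoids this difficulty by reversing the roles of the two constraints: it starts from a family $\mathcal{X}$ of tight bi-sets of $V$ that \emph{already} covers every edge of $F$ (such a family exists by minimality of $H$) and, among those, minimises $\sum_{\X\in\mathcal{X}}|X_I|$; it then proves that this extremal family is automatically innerly-disjoint. The uncrossing now takes place between two members $\X,\Y$ of the family, and the candidate replacements $\X\sqcup\Y$ or $\{\X\sqcap\o\Y,\,\o\X\sqcap\Y\}$ preserve coverage trivially while strictly decreasing $\sum|X_I|$. The residual case ($X_O\cup Y_O=V$ and $X_I\subseteq Y_O$, say) is then dispatched not with Claim~\ref{tight_intersection} alone but with inequality~\eqref{pairs}, exploiting $|w(\o\X\sqcup\Y)|\geq 2$ and $d_H(s,w(\Y))\geq 1$. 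Fixing coverage first and proving disjointness second is what makes the uncrossing close; your inverse scheme (fixing disjointness, extending coverage) runs into the wall problem precisely because shrinking $\Y$ against $\X$ removes $X_O$, not just $X_I$, from $Y_I$.
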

\begin{proof} If $H'=(V+s,E \cup F')$ is an $s$-extension of $G$ that is  $(2k,k)$-connected in $V$ and $\mathcal{X}'$ is an arbitrary family of non-trivial pairwise innerly-disjoint bi-sets of $V$ then $\sum_{\X' \in \mathcal{X}'}(2k-f_G(\X')) \leq \sum_{\X' \in \mathcal{X}'}(f_H(\X')-f_G(\X')) = \sum_{\X' \in \mathcal{X}'}\d_{(V+s,F')}(\X') \leq |F'|.$ This shows that $\max \leq \min$.
 
 To prove that equality holds, we provide a family $\mathcal{X}$ of non-trivial pairwise innerly-disjoint bi-sets of $V$ and an $s$-extension of $G$ that is  $(2k,k)$-connected in $V$ of size $\sum_{\X \in \mathcal{X}}(2k-f_G(\X))$. We consider the $s$-extension of $G$ whose set of new edges consists of $\max_{\X}(2k-f_G(\X))$ parallel edges $sv$, for each $v \in V$. This extension is obviously $(2k,k)$-connected in $V$. Then we remove as many new edges as possible without destroying the $(2k,k)$-connectivity in $V$. Let us denote by $F$ the set of remaining edges and $H=(V+s,E \cup F)$. In $H$, by minimality of $F$, for each $e \in F$, there exists a tight bi-set of $V$ entered by $e$. Let $\mathcal{X}$ be a family of non-trivial tight bi-sets of $V$ such that 
 \begin{eqnarray}
  & \textrm{each edge of $F$ enters at least one element of $\mathcal{X}$ and }  \label{eq_X} \\
  & \sum_{\X \in \mathcal{X}}|X_I| \textrm{ is minimal.} \label{sum_minimal}
 \end{eqnarray}
 \begin{claim} \label{disjoint_inner_sets}
  The elements of $\mathcal{X}$ are pairwise innerly-disjoint.
 \end{claim}
 \begin{proof} 
  Note that, the degree of each tight bi-set $\X$ in $\mathcal{X}$ is at least one thus $|w(\X)| \leq 1.$
  Suppose there exist two distinct elements $\X$ and $\Y$ in $\mathcal{X}$ such $X_I \cap Y_I \neq \emptyset$, that is $\X \sqcap \Y$ is a non-trivial bi-set of $V$.
  
  If $\X \sqcup \Y$ is a non-trivial bi-set of $V$ then, by $(2k,k)$-connectivity in $V$ of $H$, tightness of $\X$ and $\Y$ and Claim \ref{tight_intersection}, $\X \sqcup \Y$ is tight. Since all the edges of $F$ entering $X_I$ or $Y_I$ enters $(\X \sqcup \Y)_I$, the family obtained from $\mathcal{X}$ by substituting $\X \sqcup \Y$ for $\X$ and $\Y$ satisfies \eqref{eq_X} and, by $X_I \cap Y_I \neq \emptyset$, contradicts \eqref{sum_minimal}. So $X_O \cup Y_O = V$.
 
 If $\X \sqcap \o{\Y}$ and $\o{\X} \sqcap \Y$ are non-trivial bi-sets of $V$ then, by $(2k,k)$-connectivity in $V$ of $H$, tightness of $\X$ and $\Y$ and Claim \ref{tight_intersection}, both $\X \sqcap \o{\Y}$ and $\o{\X} \sqcap \Y$ are tight and $d_H(\o{X_O} \cap \o{Y_I},X_I \cap Y_O)=d_H(Y_I \cap X_O, \o{Y_O} \cap \o{X_I})=0$. Hence all the edges of $F$ entering $X_I$ or $Y_I$ enters $(\X \sqcap \o{\Y})_I$ or $(\o{\X} \sqcap \Y)_I$. Thus the family obtained from $\mathcal{X}$ by substituting $\X \sqcap \o{\Y}$ and $\o{\X} \sqcap \Y$ for $\X$ and $\Y$ satisfies \eqref{eq_X} and, by $X_I \cap Y_I \neq \emptyset$, contradicts \eqref{sum_minimal}. So, by symmetry, we may assume that $X_I \subseteq Y_O.$

 We have $N_H(s) \cap X_I \nsubseteq Y_I$ otherwise $\mathcal{X} - \X$ satisfies (\ref{eq_X}) and contradicts the minimality of $\mathcal{X}$. Thus, by $X_I \subseteq Y_O$, $d_H(s,w(\Y)) \geq 1$ and, since $X_O \cup Y_O = V$ and $\Y $ is non-trivial, $w(\X) \setminus Y_O = X_O \setminus Y_O = (X_O \cup Y_O) \setminus Y_O = V \setminus Y_O$ is non-empty. So $|w(\o{X} \sqcup \Y)| \geq 2.$
 
 For the same reason as above, $N_H(s) \cap Y_I \nsubseteq X_I.$ Thus, by $|w(\X)|\leq 1$ and $w(\X) \setminus Y_O \neq \emptyset$, the set $Y_I \setminus X_O = Y_I \setminus X_I$ contains a neighbor of $s$, that is $\o{\X} \sqcap \Y$ is non-trivial. Thus, by  symmetry of $f_H$, tightness of $\X$ and $\Y$ and \eqref{pairs}, we have the following contradiction
  $0 + 0 = (f_H(\o{\X})-2k)+(f_H(\Y)-2k) \geq d_H(X_I \cap Y_O,\o{X_O} \cap \o{Y_I}) \geq d_H(s,w(\Y)) \geq 1.$
 \end{proof}
 By Claim \ref{disjoint_inner_sets}, \eqref{eq_X} and by tightness of the elements of $\mathcal{X}$, we have $|F| = \sum_{\X \in \mathcal{X}} \d_{(V+s,F)}(\X) = \sum_{\X \in \mathcal{X}} (f_H(\X) - f_G(\X)) = \sum_{\X \in \mathcal{X}} (2k - f_G(\X))$.
\end{proof}

The augmentation theorem goes as follows.

\begin{theo} \label{augmentation_th}
 Let $G=(V,E)$ be a graph and $k \geq 2$ an integer. The minimum cardinality $\gamma$ of a set $F$ of edges such that $(V,E \cup F)$ is $(2k,k)$-connected is equal to
  \[\alpha = \left \lceil \frac{1}{2} \max \left \{ \sum_{\X \in \mathcal{X}}(2k-f_G(\X)) \right \} \right \rceil ,\]
 where $\mathcal{X}$ is a family of non-trivial pairwise innerly-disjoint bi-sets of $V$.
\end{theo}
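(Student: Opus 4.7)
The strategy is Frank's classical augmentation scheme: for the lower bound, count how many innerly-disjoint bi-sets any augmenting edge can enter; for the upper bound, build a minimum $s$-extension via Lemma \ref{minimal_extension}, fix parity if needed, then split it back via Theorem \ref{splitting_th}. The main subtlety is to guarantee that no obstacle at $s$ exists in the extension on which we want to split.

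For the lower bound, let $F$ be any edge set making $(V,E \cup F)$ $(2k,k)$-connected, and let $\mathcal{X}$ be a family of non-trivial pairwise innerly-disjoint bi-sets of $V$. Each endpoint of an edge $uv \in F$ lies in $X_I$ for at most one $\X \in \mathcal{X}$, so $uv$ enters at most two members of $\mathcal{X}$. Combining this with $\d_{(V,F)}(\X) \geq 2k - f_G(\X)$ for every $\X \in \mathcal{X}$ gives $2|F| \geq \sum_{\X \in \mathcal{X}}(2k - f_G(\X))$, hence $\gamma \geq \alpha$.

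For the upper bound, apply Lemma \ref{minimal_extension} to obtain a $(2k,k)$-connected-in-$V$ $s$-extension $H_0 = (V+s, E \cup F_0)$ with $|F_0| = M$, where $M$ is the maximum from Lemma \ref{minimal_extension}. By \eqref{st_removable}, $H_0$ has no obstacle at $s$: if $(t,\C)$ were such an obstacle, then $H_0 - st$ would be a strictly smaller $(2k,k)$-connected-in-$V$ extension, violating minimality. If $M$ is even, set $H := H_0$; if $M$ is odd, then $d_{H_0}(s) = M$ is odd so some vertex $v \in V$ has $d_{H_0}(s,v)$ odd, and we set $H := H_0 + sv$. In both cases $H$ is $(2k,k)$-connected in $V$ and $d_H(s) = 2\alpha$ is even. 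I claim $H$ has no obstacle at $s$. The even case was just handled, so assume $M$ is odd and let $(t, \C)$ be an obstacle in $H$. If $t \neq v$, then $v \in N_H(s) \setminus \{t\}$, so by \eqref{eq4} some $\X \in \C$ has $v \in X_I$; since $sv$ enters $\X$ and $\X$ is critical in $H$, we get $f_{H_0}(\X) = f_H(\X) - 1 = 2k - 1$, contradicting the $(2k,k)$-connectivity of $H_0$ in $V$. If instead $t = v$, then $d_H(s,v) = d_{H_0}(s,v) + 1$ is even, contradicting \eqref{eq1}.

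For $\alpha \geq 2$ we have $d_H(s) \geq 4$, so Theorem \ref{splitting_th} furnishes a complete admissible splitting-off at $s$ in $H$; the resulting $\alpha$ new edges on $V$ form an augmenting set, proving $\gamma \leq \alpha$. The boundary cases $\alpha \in \{0,1\}$ are routine from Lemma \ref{minimal_extension}. The hard step of this plan is the no-obstacle claim for $H$, which knits together the minimality of $H_0$ (via \eqref{st_removable}) with the parity condition \eqref{eq1} and the covering property \eqref{eq4} in the definition of an obstacle.
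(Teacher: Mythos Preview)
Your proposal is correct and follows essentially the same route as the paper: the lower bound via the ``each new edge enters at most two innerly-disjoint bi-sets'' count, and the upper bound via Lemma~\ref{minimal_extension}, a parity fix, the no-obstacle argument using \eqref{st_removable} for the even case and the \eqref{eq4}/\eqref{eq1} dichotomy for the odd case, then Theorem~\ref{splitting_th}. Your explicit mention of the boundary cases $\alpha\in\{0,1\}$ (where $d_H(s)<4$ so Theorem~\ref{splitting_th} does not literally apply) is a small improvement over the paper, which glosses over this.
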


\begin{proof}

 We first prove $\gamma \geq \alpha$. Let $\mathcal{X}$ be a family of non-trivial bi-sets of $V$ such that the elements of $\mathcal{X}$ are pairwise innerly-disjoint. For each $\X \in \mathcal{X}$, we must add at least $2k - f_G(\X)$ new edges entering the bi-set $\X$ when this quantity is positive. Since the elements of $\mathcal{X}$ are pairwise innerly-disjoint, a new edge may enter at most $2$ elements of $\mathcal{X}$. Hence $2\gamma \geq \sum_{\X \in \mathcal{X}}(2k-f_G(\X)).$

 We now prove $\gamma \leq \alpha$.
  By Lemma \ref{minimal_extension}, there exists an $s$-extension $H=(V+s,E \cup F)$ of $G$  that is $(2k,k)$-connected in $V$ and a family $\mathcal{X}$ of non-trivial pairwise innerly-disjoint bi-sets of $V$ such that
 \[|F| = \sum_{\X \in \mathcal{X}} (2k - f_G(\X)) .\] 
 If $|F|$ is odd, then there exists a vertex $u \in V$ such that $d_H(s,u)$ is odd, in this case, let $F'=F \cup su$ otherwise let $F'=F$. So, in the graph $H'=(V \cup s, E \cup F')$, $d_{H'}(s)$ is even.
 Suppose there exists an obstacle $(t,\C)$ at $s$. By Claim \ref{st_removable}, $H'-st$ is $(2k,k)$-connected in $V$. If $H=H'$ this contradicts the minimality of $|F|$.
 Then $d_H(s)$ is odd and $F'=F+su$ for some vertex $u \in V$ such that $d_H(s,u)$ is odd. If $u \in X_I$ for some $\X \in \C$, then we have $f_{H}(\X) = f_{H'}(\X) - 1 = 2k-1$, a contradiction to the $(2k,k)$-connectivity of $H$. Thus, by \eqref{eq4}, $u=t$ and hence $d_{H'}(s,t) = d_H (s,t) + 1$ is even, that contradicts \eqref{eq1}.
 Hence, by Theorem \ref{splitting_th}, there exists an admissible complete splitting-off at $s$ in $H'$. Let us denote by $F''$ the set of edges obtained by this complete splitting-off. Then $(V,E\cup F'')$ is $(2k,k)$-connected and
 \[|F''| = \frac{1}{2}|F'|= \left \lceil \frac{1}{2} |F| \right \rceil = \left \lceil \frac{1}{2} \sum_{X \in \mathcal{X}} (2k - f_G(X)) \right \rceil.\] 
 This proves $\gamma \leq \alpha$.
\end{proof}


 \bibliographystyle{plain}
 \bibliography{../../biblio/biblio}

\end{document}